\documentclass[11pt,reqno]{amsart}
\usepackage{amssymb, a4wide, amsmath, mathtools,xy}
\xyoption{all}
\usepackage{latexsym,pdfsync,xcolor,graphicx}
\usepackage{multirow}

\usepackage[T1]{fontenc}

\usepackage[utf8]{inputenc}

\usepackage{array}
\usepackage{upgreek}
\usepackage{pstricks-add}
\usepackage{enumerate}
\usepackage{orcidlink}
\usepackage[english]{babel}	
\usepackage{comment}
\allowdisplaybreaks

\usepackage{pgf,tikz}
\usepackage{wrapfig}
\usetikzlibrary{arrows}

\usepackage{float}
\usepackage{appendix}

\usepackage{hyperref}
\hypersetup{colorlinks,
    citecolor=black,
    filecolor=black,
    linkcolor=black,
    urlcolor=black}

\theoremstyle{plain}%
\newtheorem{theorem}{Theorem}[section]
\newtheorem{lemma}[theorem]{Lemma}%
\newtheorem{corollary}[theorem]{Corollary}%

\theoremstyle{definition}
\newtheorem{definition}[theorem]{Definition}%
\newtheorem{example}[theorem]{Example}%

\theoremstyle{remark}
\newtheorem{remark}[theorem]{Remark}

\newcommand{\ot}{\otimes}

\title[On modules over a Hopf brace]{On modules over a Hopf brace}

\begin{document}

\maketitle

\begin{center}
	{\bf R.
		Gonz\'{a}lez Rodr\'{\i}guez$^{1,2}$, B.  Ramos P\'erez$^{1,3}$ and A.B. Rodríguez Raposo$^{1,4}$ }.
\end{center}

\vspace{0.4cm}

\begin{center}	{\small $^{1}$ CITMAga, 15782 Santiago de Compostela, Spain.}
\end{center}
\vspace{0.2cm}
\begin{center}
	{\small $^{2}$ Universidade de Vigo, Departamento de Matem\'{a}tica Aplicada II, E.E. Telecomunicaci\'on,
		E-36310 Vigo, Spain.
		\\email: rgon@dma.uvigo.es}
\end{center}
\vspace{0.2cm}
\begin{center}
	{\small $^{3}$  Universidade de Santiago de Compostela. Departamento de Matem\'aticas,  Facultade de Matem\'aticas, E-15782 Santiago de Compostela, Spain. 
		\\email: braisramos.perez@usc.es}
\end{center}
\vspace{0.2cm}
\begin{center}
	{\small $^{4}$  Universidade de Santiago de Compostela, Departamento de Didácticas Aplicadas,  Facultade de C.C. Educación, E-15782 Santiago de Compostela, Spain.
		\\email: anabelen.rodriguez.raposo@usc.es}
\end{center}
\vspace{0.2cm}

\begin{abstract}
     Let $\mathbb{H}=(H_{1},H_{2})$ be a Hopf brace in a symmetric monoidal category ${\sf C}$. In this article it is proved that the category of modules over $\mathbb{H}$ is isomorphic to the category of modules over the smash product algebra $H_{1}\sharp H_{2}$. Furthermore, the category of modules over $\mathbb{H}$ in the sense of Zhu is characterized by the condition that a certain action lies in the cocommutativity class of $H_{2}$.    
 \end{abstract}
 
 {\footnotesize {\sc Keywords}: Monoidal category, Hopf brace, smash product, module categories
 }
 
 {\footnotesize {\sc 2020 Mathematics Subject Classification}: 16D90, 16T05, 16S40, 18M05. 
 }

\section{Introduction}
The Quantum Yang-Baxter Equation (QYBE) was introduced by Chen~N.~Yang \cite{Yang} and Rodney~J.~Baxter \cite{Bax} between the late 1960s and the early 1970s in order to study integrable systems in the field of quantum physics and statistical mechanics. At present, it is considered as one of the most important equations in the field of mathematical physics, serving as a bridge between abstract algebra and theoretical physics. Despite the simplicity of its formulation, obtaining a complete classification of the solutions to the QYBE is by no means a simple problem and it is still open nowadays. In an effort to tackle it, Drinfeld \cite{Drin} proposed to study the simplest class of solutions, the set-theoretical ones.

In this line, L. Guarnieri and L. Vendramin \cite{GV} introduce the notion of skew brace as a non-abelian version of the brace structure previously given by W. Rump \cite{Rump}, proving that skew braces induce non-degenerate and non-necessarily involutive solutions to the Yang-Baxter Equation. These structures occupy nowadays a central role in the classification of set-theoretical solutions.  Thus, based on the fact that Hopf algebras are the quantum analogues of groups, I. Angiono, C. Galindo and L. Vendramin \cite{AGV} provide the notion of a Hopf brace as the quantum version of a skew brace, that is, passing from a group-theoretic structure to one where the underlying structures are vector spaces. More concretely, if $(H,\epsilon,\Delta)$ is a coalgebra, a Hopf brace structure $\mathbb{H} = (H_1, H_2)$ on $H$ consists of a pair of Hopf algebras  \[H_{1}=(H,1,\cdot,\epsilon,\Delta,S) \textnormal{ and } H_{2}=(H,1_{\circ},\circ,\epsilon,\Delta,T),\] 
such that the following compatibility condition between the products holds:
\[g\circ(h\cdot k)=(g_{1}\circ h)\cdot S(g_{2})\cdot(g_{3}\circ k)\]
for all $g,h,k\in H$. As established by the authors, cocommutative Hopf braces also yield solutions to the QYBE, and such solutions are involutive in the case that $H_{1}$ is a commutative Hopf algebra \cite[Corollaries 2.4 and~2.5]{AGV}.

Once the notion of brace in the abelian setting was established, Rump  gave a definition of module over a brace 
$(A, +, \circ)$ \cite[Definition 3]{Rump}. A remarkable point in this definition is that it only relies on a representation of the second group $(A, \circ)$, due to the fact of considering abelian groups. When the notion of brace is extended to non-abelian contexts, Rump's first approach needs to be extended to include not only the second structure but also the first one. In this line,  H. Zhu \cite{ZHU} gives a quite restrictive notion of modules over  a Hopf brace. On the other hand, R. González \cite{RGON} obtains a wider definition that contains Zhu's one, and they happen to be equivalent if the underlying coalgebra is cocommutative. However, if we do not ask for any extra condition, both definitions differ. For example, while González's one includes $\mathbb H$ as a $\mathbb H$-module, Zhu's one does not, so González's approach seems to be more suitable in non-cocommutative contexts.

At this point, modules over a Hopf  brace need to involve (at least) two structures of modules over two different (although related) Hopf algebras. However, Y. Kozakai and C. Tsang  point out in \cite[Remarks 2.2 and 3.2]{KOZTSANG} that modules in the sense of Zhu can be regarded as modules over an algebra, indeed, as the semidirect product of the underline algebras of the Hopf brace. However, they do not obtain an equivalence between both categories of modules. 

Thus, in this work we deal with two clearly distinct questions. The first one is related to the possibility of interpreting the category of modules over a Hopf brace as a category of modules over an algebra. In Section \ref{sec_equivalence}, using the definition proposed by González, we prove that the category of modules over a Hopf brace $\mathbb{H}$ is isomorphic to the category of modules over a smash product algebra arising from the underlying Hopf brace $\mathbb{H}$ (see Theorem \ref{mainth}).  Also, as a consequence of this theorem, we can assure that the category of modules over a Hopf brace  in González's sense can be seen as a category of Doi-Hopf modules (see Corollary \ref{mainthcor}). 

The second question is addressed in Section \ref{comparison}  and it concerns the relation between González's and Zhu's modules. In Theorem \ref{firstM} we characterize Zhu's modules in terms of the so-called cocommutativity class (see \cite{AVG}). This characterization may help in skipping long calculations when Zhu's approach could be needed.

\section{Preliminaries about Hopf algebras and Hopf braces}
We work throughout this article in a strict symmetric monoidal framework. By Mac Lane's coherence theorem, the strictness assumption entails no loss of generality, since any monoidal category is monoidal equivalent to a strict one. Therefore, the statements proved below remain valid in the general non-strict setting, up to the standard modifications. In what follows, ${\sf C}$ denotes a strict symmetric monoidal category with tensor product $\otimes$, unit object $K$, and symmetry $c$. For any morphism $f\colon M\rightarrow N$ and any object $P$ in ${\sf C}$, we shall use the notation $P\otimes f$ and $f\otimes P$ instead of ${\rm id}_{P}\otimes f$ and $f\otimes{\rm id}_{P}$, respectively. For the reader's convenience, in the first part of this section we recall some standard definitions that we will use repeatedly in the development of the article.

\begin{definition}
An algebra in {\sf C} is a triple $A=(A,\eta_{A},\mu_{A})$ where $\eta_{A}\colon K\rightarrow A$ (unit) and $\mu_{A}\colon A\otimes A\rightarrow A$ (product) are morphisms in {\sf C} satisfying the following conditions:
\begin{gather*}
\mu_{A}\circ (\eta_{A}\otimes A)={\rm id}_{A}=\mu_{A}\circ(A\otimes\eta_{A}),\quad\mu_{A}\circ(\mu_{A}\otimes A)=\mu_{A}\circ(A\otimes\mu_{A}).
\end{gather*}
	
Note that, given $B=(B,\eta_{B},\mu_{B})$ another algebra in {\sf C}, the tensor product $A\otimes B$ admits an algebra structure whose unit and product are defined as follows:
$$
\eta_{A\otimes B}\coloneqq \eta_{A}\otimes\eta_{B}, \quad
\mu_{A\otimes B}\coloneqq (\mu_{A}\otimes\mu_{B})\circ(A\otimes c_{B,A}\otimes B).
$$
	
Moreover, a morphism $f\colon A\rightarrow B$ in ${\sf C}$ is said to be an algebra morphism if it preserves the unit and the product, that is,
$$
f\circ\eta_{A}=\eta_{B}, \quad f\circ\mu_{A}=\mu_{B}\circ(f\otimes f).
$$
	
Dually, we will say that a triple $C=(C,\varepsilon_{C},\delta_{C})$ is a coalgebra in {\sf C} if  $\varepsilon_{C}\colon C\rightarrow K$ (counit) and $\delta_{C}\colon C\rightarrow C\otimes C$ (coproduct) are morphisms in {\sf C} satisfying the following conditions:
\begin{gather*}
(\varepsilon_{C}\otimes C)\circ\delta_{C}={\rm id}_{C}=(C\otimes\varepsilon_{C})\circ\delta_{C},\quad(\delta_{C}\otimes C)\circ\delta_{C}=(C\otimes\delta_{C})\circ\delta_{C}.
\end{gather*}
	
Likewise, if $D=(D,\varepsilon_{D},\delta_{D})$ is another coalgebra in {\sf C}, the tensor product $C\otimes D$ is a coalgebra with the following counit and coproduct:
$$
\varepsilon_{C\otimes D}\coloneqq\varepsilon_{C}\otimes\varepsilon_{D},\quad
\delta_{C\otimes D}\coloneqq(C\otimes c_{C,D}\otimes D)\circ(\delta_{C}\otimes\delta_{D}).
$$
	
Besides, a morphism $g\colon C\rightarrow D$ in ${\sf C}$ is a coalgebra morphism if the equalities
$$
\varepsilon_{D}\circ g=\varepsilon_{C}, \quad\delta_{D}\circ g=(g\otimes g)\circ\delta_{C}
$$
hold, i.e., $g$ preserves the counit and it is comultiplicative.
\end{definition}

\begin{definition}
Let $C$ be a coalgebra and let $A$ be an algebra in ${\sf C}$. Let $f,g:C\rightarrow A$ be two morphisms in ${\sf C}$. We define the convolution product of $f$ and $g$ as 
\[f\ast g\coloneqq\mu_{A}\circ (f\otimes g)\circ\delta_{C}.\]

The set of morphisms in ${\sf C}$ from $C$ to $A$, denoted by $\operatorname{Hom}_{{\sf C}}(C,A)$, is a monoid with the convolution product whose unit is given by $\eta_{A}\circ\varepsilon_{C}=\varepsilon_{C}\otimes\eta_{A}$.
\end{definition}

\begin{definition}
Let $A=(A,\eta_{A},\mu_{A})$ be an algebra in ${\sf C}$. A left $A$-module in ${\sf C}$ is a pair $(M,\varphi_{M})$ where $\varphi_{M}\colon A\otimes M\rightarrow M$ (action) is a morphism in ${\sf C}$ which satisfies the following equalities:
\begin{equation}\label{actioneta}
\varphi_{M}\circ(\eta_{A}\otimes M)={\rm id}_{M},
\end{equation}
\begin{equation}
\label{actionprod}
\varphi_{M}\circ(A\otimes\varphi_{M})=\varphi_{M}\circ(\mu_{A}\otimes M).
\end{equation}

Let $(N,\varphi_{N})$ be a left $A$-module. A morphism $f\colon M\rightarrow N$ is a morphism of left $A$-modules (or, in other words, $f$ is $A$-linear) if 
\begin{gather}\label{mod_mor}
	f\circ\varphi_{M}=\varphi_{N}\circ(A\otimes f).
\end{gather}

With the previous morphisms, these objects constitute a category denoted by ${}_{A}{\sf Mod}$.

In a dual way, it is possible to define the category of right $C$-comodules for a coalgebra $C=(C,\varepsilon_{C},\delta_{C})$. A right $C$-comodule in ${\sf C}$ is a pair  $(M,\rho_{M})$ where $\rho_{M}\colon M\rightarrow M\otimes C$ (coaction) is a morphism in ${\sf C}$ which satisfies the following equalities:
\begin{equation*}\label{coactionvar}
(M\otimes \varepsilon_{C})\circ \rho_{M}={\rm id}_{M},
\end{equation*}
\begin{equation*}
\label{coactioncoprod}
(\rho_{M}\otimes C)\circ \rho_{M}=(M\otimes \delta_{C})\circ \rho_{M}.
\end{equation*}

Let $(N,\rho_{N})$ be a right $C$-comodule. A morphism $f\colon M\rightarrow N$ is a morphism of right  $C$-comodules (or, in other words, $f$ is $C$-colinear) if 
\begin{gather*}\label{comod_mor}
\rho_{N}\circ f=(f\otimes C)\circ \rho_{M}.
\end{gather*}

With the previous morphisms, these objects constitute a category denoted by ${\sf Mod}^C$.
\end{definition}

When an object in the category is endowed with both an algebra and a coalgebra structure that are compatible with each other, we obtain the notion of a bialgebra.
\begin{definition}
	A 5-tuple $B=(B,\eta_{B},\mu_{B},\varepsilon_{B},\delta_{B})$ is said to be a bialgebra in {\sf C} if $(B,\eta_{B},\mu_{B})$ is an algebra in {\sf C}, $(B,\varepsilon_{B},\delta_{B})$ is a coalgebra in {\sf C} and $\eta_{B}$ and $\mu_{B}$ are coalgebra morphisms (equivalently, if $\varepsilon_{B}$ and $\delta_{B}$ are algebra morphisms).
	
	If $D=(D,\eta_{D},\mu_{D},\varepsilon_{D},\delta_{D})$ is another bialgebra in ${\sf C}$, a morphism in ${\sf C}$ $f\colon B\rightarrow D$ is a bialgebra morphism if it is simultaneously an algebra and a coalgebra morphism.
\end{definition}

In the following definition we recall the notion of Hopf algebra.
\begin{definition}
	A 6-tuple $H=(H,\eta_{H},\mu_{H},\varepsilon_{H},\delta_{H},\lambda_{H})$ is said to be a Hopf algebra in {\sf C} when $(H,\eta_{H},\mu_{H},\varepsilon_{H},\delta_{H})$ is a bialgebra such that there exists an endomorphism $\lambda_{H}\colon H\rightarrow H$, called the antipode, satisfying the following identity:
	\begin{gather}\label{antipode}
		\lambda_{H}\ast {\rm id}_{H}=\varepsilon_{H}\otimes\eta_{H}={\rm id}_{H}\ast \lambda_{H}.
\end{gather}
\end{definition}

The equation \eqref{antipode} means that the antipode, $\lambda_{H}$, is the inverse of the identity for the convolution product in $\operatorname{Hom}_{{\sf C}}(H,H)$ and, as a consequence, it is unique.

A Hopf algebra $H$ is said to be commutative when the underlying algebra is commutative, i.e., if the equality $\mu_{H}\circ c_{H,H}=\mu_{H}$, whereas $H$ is said to be cocommutative when the underlying coalgebra is cocommutative, that is, in case that $c_{H,H}\circ\delta_{H}=\delta_{H}$.

The antipode of a Hopf algebra $H$ is antimultiplicative and anticomultiplicative, that is, it satisfies:
\begin{equation*}\label{a-antip1}
	\lambda_{H}\circ\mu_{H}=\mu_{H}\circ c_{H,H}\circ(\lambda_{H}\otimes\lambda_{H}),
\end{equation*}
\begin{equation}
\label{a-antip2}
	\delta_{H}\circ\lambda_{H}=(\lambda_{H}\otimes\lambda_{H})\circ c_{H,H}\circ \delta_{H}{\color{blue} .}
\end{equation}

Moreover the antipode preserves the unit and the counit, i.e.,
$\lambda_{H}\circ\eta_{H}=\eta_{H}$,
$\varepsilon_{H}\circ\lambda_{H}=\varepsilon_{H}.$

As a consequence, if $H$ is commutative, then  $\lambda_H$ is an algebra morphism and, if $H$ is cocommutative,  $\lambda_H$ is a coalgebra morphism. In any of these cases the equality  
\begin{gather}\label{lambdasquareid}
	\lambda_{H}\circ\lambda_{H}={\rm id}_{H}
\end{gather}
holds,  i.e., $\lambda_H$ is involutive.

Given $B=(B,\eta_{B},\mu_{B},\varepsilon_{B},\delta_{B},\lambda_{B})$ another Hopf algebra in ${\sf C}$, a morphism $f\colon H\rightarrow B$ is a Hopf algebra morphism if it is a bialgebra morphism. Note that, in this case, $f$ commutes with the antipodes, that is,
\begin{gather*}\label{morant}
	\lambda_{B}\circ f=f\circ\lambda_{H}.
\end{gather*}

\begin{definition}
	An object $P$ in {\sf C} is profinite if there exists an object $P^{\ast}$ in ${\sf C}$, called the dual of $P$, and a  {\sf C}-adjuntion $-\otimes P\dashv -\otimes P^{\ast}$. We will denote by $a_{P}$ and $b_{P}$ the unit and the counit of the previous  {\sf C}-adjuntion, respectively. Recall that the unit and the counit are collections of natural isomorphisms in {\sf C} given by $$a_{P}=\{a_{P}(X):X\rightarrow X\otimes P\otimes P^{\ast}; \; X\in {\sf C}\},\quad b_{P}=\{b_{P}(X):X\otimes  P^{\ast}\otimes P\rightarrow X; \; X\in {\sf C}\}$$ satisfying the triangular identities.

\end{definition}

Let $H$ be a Hopf algebra in {\sf C}. If $H$ is profinite, then its dual $H^{\ast}$ is a Hopf algebra in ${\sf C}$ where 
\begin{gather*}\eta_{H^{\ast}}=(\varepsilon_{H}\otimes H^{\ast})\circ a_{H}(K), \quad \varepsilon_{H^{\ast}}=b_{H}(K)\circ (H^{\ast}\otimes \eta_{H}),\\
\mu_{H^{\ast}}=((b_{H}(K)\circ (H^{\ast}\otimes b_{H}(K) \otimes H)) \otimes H^{\ast})\circ (H^{\ast}\otimes H^{\ast}\otimes ((c_{H,H}\circ \delta_{H})\otimes H^{\ast})\circ a_{H}(K)),\\
\delta_{H^{\ast}}=((b_{H}(K)\circ (H^{\ast}\otimes (\mu_{H}\circ c_{H,H})))\otimes H^{\ast}\otimes H^{\ast})\circ (H^{\ast}\otimes ((H\otimes a_{H}(K)\otimes H^{\ast})\circ a_{H}(K))),\\
\lambda_{H^{\ast}}=((b_{H}(K)\circ (H^{\ast}\otimes\lambda_{H} ))\otimes H^{\ast})\circ (H^{\ast}\otimes a_{H}(K)).
\end{gather*}

\begin{definition}
Let $H$ be a Hopf algebra and $A$ an algebra in ${\sf C}$. A left $H$-module $(A,\varphi_{A})$ is said to be a left $H$-module algebra if $\eta_{A}$ and $\mu_{A}$ are $H$-linear morphisms, that is, if the following equalities
\begin{equation*}
\label{Hmodalg1}\varphi_{A}\circ(H\otimes\eta_{A})=\varepsilon_{H}\otimes\eta_{A},
\end{equation*}
\begin{equation*}		
\label{Hmodalg2}\varphi_{A}\circ(H\otimes\mu_{A})=\mu_{A}\circ\varphi_{A\otimes A},
\end{equation*} 
hold, where $\varphi_{A\otimes A}\coloneqq(\varphi_{A}\otimes\varphi_{A})\circ(H\otimes c_{H,A}\otimes A)\circ(\delta_{H}\otimes A\otimes A)$.
\end{definition}

\begin{example}
Given a Hopf algebra $H$, $(H,\varphi_{H}^{{\rm ad}})$ is a left $H$-module algebra where $\varphi_{H}^{{\rm ad}}$ is the so-called adjoint action defined by 
\[\varphi_{H}^{{\rm ad}}\coloneqq \mu_{H}\circ(\mu_{H}\otimes\lambda_{H})\circ(H\otimes c_{H,H})\circ(\delta_{H}\otimes H).\]
\end{example}

The next theorem is a very well known result in the theory of Hopf algebras, and provides a generalization of the semidirect product of groups, as we will make explicit later:

\begin{theorem}\label{th-smash}

Let $H$ be a Hopf algebra and $(A, \varphi_A)$ a left $H$-module algebra, and define
\[A\sharp H=(A\otimes H,\eta_{A\sharp H}\coloneqq\eta_{A}\otimes\eta_{H},\mu_{A\sharp H}\coloneqq(\mu_{A}\otimes\mu_{H})\circ (A\otimes \Psi^{H}_{A}\otimes H)),\]
where $\Psi^{H}_{A}\coloneqq (\varphi_{A}\otimes H)\circ(H\otimes c_{H,A})\circ(\delta_{H}\otimes A)$.
Then, $A\sharp H$ is an algebra in ${\sf C}$, known as the smash product algebra. 
\end{theorem}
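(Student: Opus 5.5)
To prove that $A\sharp H$ is an algebra, we check the unit and associativity axioms for $\mu_{A\sharp H}$. The natural route is to first collect a few properties of the morphism $\Psi^{H}_{A}\colon H\otimes A\rightarrow A\otimes H$ and then derive the axioms formally from them. These properties say that $\Psi^{H}_{A}$ is a (twisted) distributive law between $A$ and $H$:
\begin{gather*}
\Psi^{H}_{A}\circ(\eta_{H}\otimes A)=A\otimes\eta_{H},\qquad \Psi^{H}_{A}\circ(H\otimes\eta_{A})=\eta_{A}\otimes H,\\
\Psi^{H}_{A}\circ(\mu_{H}\otimes A)=(A\otimes\mu_{H})\circ(\Psi^{H}_{A}\otimes H)\circ(H\otimes\Psi^{H}_{A}),\\
\Psi^{H}_{A}\circ(H\otimes\mu_{A})=(\mu_{A}\otimes H)\circ(A\otimes\Psi^{H}_{A})\circ(\Psi^{H}_{A}\otimes A).
\end{gather*}

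The first identity follows from $\delta_{H}\circ\eta_{H}=\eta_{H}\otimes\eta_{H}$, naturality of $c$, and \eqref{actioneta}. The second follows from the first module algebra condition, $\varphi_{A}\circ(H\otimes\eta_{A})=\varepsilon_{H}\otimes\eta_{A}$, together with the counit property of $\delta_{H}$.

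The third identity uses that $\delta_{H}$ is multiplicative, i.e. $\delta_{H}\circ\mu_{H}=\mu_{H\otimes H}\circ(\delta_{H}\otimes\delta_{H})$. Substituting this, applying \eqref{actionprod} to the first tensor factor, and rearranging symmetries by naturality of $c$ and coassociativity gives the right-hand side.

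The fourth identity uses the second module algebra condition. Writing $\varphi_{A}\circ(H\otimes\mu_{A})=\mu_{A}\circ\varphi_{A\otimes A}$ inside $\Psi^{H}_{A}\circ(H\otimes\mu_{A})$, one obtains three copies of $H$ from coassociativity $(\delta_{H}\otimes H)\circ\delta_{H}=(H\otimes\delta_{H})\circ\delta_{H}$. Moving the symmetries by naturality and using the hexagon identities $c_{H,A\otimes A}=(A\otimes c_{H,A})\circ(c_{H,A}\otimes A)$ yields the right-hand side.

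Given these, the unit axiom is immediate. For instance,
\[\mu_{A\sharp H}\circ(\eta_{A}\otimes\eta_{H}\otimes A\otimes H)=(\mu_{A}\otimes\mu_{H})\circ(\eta_{A}\otimes A\otimes\eta_{H}\otimes H)={\rm id}_{A\otimes H}\]
by the first identity, and symmetrically on the other side using the second. For associativity, both $\mu_{A\sharp H}\circ(\mu_{A\sharp H}\otimes A\otimes H)$ and $\mu_{A\sharp H}\circ(A\otimes H\otimes\mu_{A\sharp H})$ are reduced to the common normal form
\[(\mu_{A}\otimes\mu_{H})\circ(\mu_{A}\otimes A\otimes H\otimes\mu_{H})\circ(A\otimes A\otimes\Psi^{H}_{A}\otimes H\otimes H)\circ(A\otimes\Psi^{H}_{A}\otimes\Psi^{H}_{A}\otimes H)\]
on $A\otimes H\otimes A\otimes H\otimes A\otimes H$. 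In the first expression one applies the fourth identity to $\Psi^{H}_{A}\circ(H\otimes\mu_{A})$. In the second one applies the third identity to $\Psi^{H}_{A}\circ(\mu_{H}\otimes A)$. Associativity of $\mu_{A}$ and $\mu_{H}$ then finishes the argument. This step is the standard "distributive law implies associativity" argument and is purely formal.

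The main obstacle is the fourth identity (and, to a lesser extent, the third). These are the only places where the symmetric monoidal structure must be handled carefully, because one has to move several copies of $c$ past coproducts and actions. I would do these computations with string-diagram style reasoning, keeping track of each naturality and hexagon step. The bookkeeping is lengthy but each step is justified by coassociativity, naturality of $c$, or the module algebra axioms.
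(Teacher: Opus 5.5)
The paper gives no proof of this statement; it quotes the smash product construction as a well-known fact, so there is nothing internal to compare with. Your argument is the standard one and it is correct. The four properties say that $\Psi^{H}_{A}$ is a twisting map (distributive law) between $A$ and $H$, and they hold for the reasons you give. The unit axioms follow from the first two. Your common normal form is the right one: in Sweedler notation it is $a(h_{1}\cdot b)(h_{2}\cdot(k_{1}\cdot c))\otimes h_{3}k_{2}l$.

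There is one slip in the associativity step: you attach the third and fourth identities to the wrong sides. Expanding the left-bracketed product gives
\[
\mu_{A\sharp H}\circ(\mu_{A\sharp H}\otimes A\otimes H)=(\mu_{A}\otimes\mu_{H})\circ(\mu_{A}\otimes(\Psi^{H}_{A}\circ(\mu_{H}\otimes A))\otimes H)\circ(A\otimes\Psi^{H}_{A}\otimes H\otimes A\otimes H),
\]
so the third identity is the one used there. The right-bracketed product gives
\[
\mu_{A\sharp H}\circ(A\otimes H\otimes\mu_{A\sharp H})=(\mu_{A}\otimes\mu_{H})\circ(A\otimes(\Psi^{H}_{A}\circ(H\otimes\mu_{A}))\otimes\mu_{H})\circ(A\otimes H\otimes A\otimes\Psi^{H}_{A}\otimes H),
\]
and that is where the fourth identity enters. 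With the labels exchanged, both sides reduce to your normal form using only associativity of $\mu_{A}$ and $\mu_{H}$, as you say.

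As a side benefit, organizing the proof this way shows that the antipode of $H$ is never used. So the same argument proves that $A\sharp H$ is an algebra whenever $H$ is merely a bialgebra.
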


\begin{definition}
Let $H$ be a Hopf algebra and $C$ a coalgebra in ${\sf C}$. A left $H$-module $(C,\varphi_{C})$ satisfying that $\varepsilon_{C}$ and $\delta_{C}$ are $H$-linear morphisms is said to be a left $H$-module coalgebra. In other words, $(C,\varphi_{C})$ is a left $H$-module coalgebra if the equalities 
\begin{equation}
\label{Hmodcoalg1}\varepsilon_{C}\circ\varphi_{C}=\varepsilon_{H}\otimes\varepsilon_{C},
\end{equation}
\begin{equation}\label{Hmodcoalg2}\delta_{C}\circ\varphi_{C}=\varphi_{C\otimes C}\circ(H\otimes\delta_{C})
\end{equation} 
hold. Note that \eqref{Hmodcoalg1} and \eqref{Hmodcoalg2} are equivalent to the fact that $\varphi_{C}$ is a coalgebra morphism.
\end{definition}

Let $H$ and $B$ be Hopf algebras in ${\sf C}$. If $(B,\varphi_{B})$ is both a left $H$-module algebra and a left $H$-module coalgebra, then it will be called a left $H$-module algebra-coalgebra.

A fundamental concept for this article is the notion of cocommutativity class, introduced in \cite{AVG} by Alonso Álvarez, Fernández Vilaboa and González Rodríguez. Informally, it is a condition given for a module which is weaker than the usual cocommutativity.
\begin{definition}
	Let $H$ be a Hopf algebra in ${\sf C}$. A left $H$-module $(M,\varphi_{M})$ belongs to the cocommutativity class of $H$ if it satisfies the following equality:
	\begin{gather}\label{ccclass}
		(\varphi_{M}\otimes H)\circ(H\otimes c_{H,M})\circ(\delta_{H}\otimes M)=(\varphi_{M}\otimes H)\circ(H\otimes c_{H,M})\circ((c_{H,H}\circ\delta_{H})\otimes M).
	\end{gather}
	
	Note that, thanks to the symmetric character of ${\sf C}$, \eqref{ccclass} is equivalent to 
	\begin{gather}\label{ccclass2}
		(H\otimes\varphi_{M})\circ(\delta_{H}\otimes M)=(H\otimes\varphi_{M})\circ((c_{H,H}\circ\delta_{H})\otimes M).
	\end{gather}
	
	Obviously, in the case that $H$ is a cocommutative Hopf algebra, \eqref{ccclass} automatically holds.
\end{definition}

This condition arises naturally in a variety of contexts as will be shown in the following examples.
\begin{example}\label{smashHA}
	The cocommutativity class turns out to be a necessary condition for the smash product algebra to be a Hopf algebra. More precisely, let $A$ and $H$ be Hopf algebras such that $(A,\varphi_{A})$ is a left $H$-module algebra. If, in addition, $(A,\varphi_{A})$ is a left $H$-module coalgebra satisfying \eqref{ccclass}, then 
	\[A\sharp H=(A\otimes H,\eta_{A}\otimes\eta_{H},\mu_{A\sharp H},\varepsilon_{A}\otimes\varepsilon_{H},\delta_{A\otimes H},\lambda_{A\sharp H})\]
	is a Hopf algebra where $\lambda_{A\sharp H}\coloneqq \Psi^{H}_{A}\circ(\lambda_{H}\otimes\lambda_{A})\circ c_{A,H}.$
	
	Note that, when ${\sf C}={\sf Set}$, any Hopf algebra is a group. Consequently, in this setting, the smash product Hopf algebra $\sharp$ coincides with the well-known semidirect product group $\ltimes$.
\end{example} 
\begin{example}\label{ccclass_adjoint}
	Let $H$ be a Hopf algebra whose antipode $\lambda_{H}$ is an isomorphism in ${\sf C}$. The cocommutativity class also characterizes when the adjoint action of $H$ is a coalgebra morphism, i.e., $\varphi_{H}^{{\rm ad}}$ is a coalgebra morphism if and only if $(H,\varphi_{H}^{{\rm ad}})$ belongs to the cocommutativity class of $H$. 
	
Let us see the proof of the equivalence. At first, by the counit properties, note 
that $\varepsilon_{H}\circ \varphi_{H}^{{\rm ad}}=\varepsilon_{H}\otimes \varepsilon_{H}.$ Secondly, for the adjoint action the following identity 
\begin{equation}\label{z1}
\delta_{H}\circ \varphi_{H}^{{\rm ad}}=(\mu_{H}\otimes H)\circ(\mu_{H}\otimes ((\lambda_{H}\otimes\varphi_{H}^{{\rm ad}})\circ((c_{H,H}\circ\delta_{H})\otimes H)))\circ(H\otimes c_{H,H}\otimes H)\circ(\delta_{H}\otimes\delta_{H})
\end{equation}
holds. Indeed,
\begin{itemize}
\itemindent=-32pt 
\item[]$\hspace{0.38cm}\delta_{H}\circ \varphi_{H}^{{\rm ad}}$
\item[]$=\delta_{H}\circ\mu_{H}\circ(\mu_{H}\otimes\lambda_{H})\circ(H\otimes c_{H,H})\circ(\delta_{H}\otimes H)$
\item[]$=(\mu_{H}\otimes\mu_{H})\circ(H\otimes c_{H,H}\otimes H)\circ(((\mu_{H}\otimes\mu_{H})\circ(H\otimes c_{H,H}\otimes H)\circ(\delta_{H}\otimes\delta_{H}))\otimes(\delta_{H}\circ\lambda_{H}))$
\item[]$\hspace{0.38cm}\circ(H\otimes c_{H,H})\circ(\delta_{H}\otimes H)$ {\footnotesize (by the condition of coalgebra morphism for $\mu_{H}$)}
\item[]$=(\mu_{H}\otimes\mu_{H})\circ(H\otimes c_{H,H}\otimes H)\circ(((\mu_{H}\otimes\mu_{H})\circ(H\otimes c_{H,H}\otimes H)\circ(\delta_{H}\otimes\delta_{H}))\otimes((\lambda_{H}\otimes\lambda_{H})$
\item[]$\hspace{0.38cm}\circ c_{H,H}\circ\delta_{H}))\circ(H\otimes c_{H,H})\circ(\delta_{H}\otimes H)$ {\footnotesize (by \eqref{a-antip2})}
\item[]$=((\mu_{H}\circ (\mu_{H}\otimes\lambda_{H})\circ(H\otimes c_{H,H}))\otimes(\mu_{H}\circ (\mu_{H}\otimes\lambda_{H})\circ(H\otimes c_{H,H})))$
\item[]$\hspace{0.38cm}\circ(H\otimes((H\otimes c_{H,H}\otimes H)\circ (c_{H,H}\otimes c_{H,H})\circ (H\otimes c_{H,H}\otimes H)\circ(\delta_{H}\otimes H\otimes H))\otimes H)$
\item[]$\hspace{0.38cm}\circ(((H\otimes\delta_{H})\circ\delta_{H})\otimes\delta_{H})$ {\footnotesize (by naturality of $c$, the symmetric character of ${\sf C}$ and coassociativity of $\delta_{H}$)}
\item[]$=((\mu_{H}\circ(\mu_{H}\otimes\lambda_{H}))\otimes\varphi_{H}^{{\rm ad}})\circ(H\otimes((c_{H,H}\otimes H)\circ (H\otimes c_{H,H})\circ ((c_{H,H}\circ\delta_{H})\otimes H))\otimes H)$
\item[]$\hspace{0.38cm}\circ(\delta_{H}\otimes\delta_{H})$ {\footnotesize (by naturality of $c$)}
\item[]$=(\mu_{H}\otimes H)\circ(\mu_{H}\otimes ((\lambda_{H}\otimes\varphi_{H}^{{\rm ad}})\circ((c_{H,H}\circ\delta_{H})\otimes H)))\circ(H\otimes c_{H,H}\otimes H)\circ(\delta_{H}\otimes\delta_{H})$ 
\item[]$\hspace{0.38cm}${\footnotesize (by naturality of $c$)}. 
\end{itemize}
	
Therefore, on the one hand, let us assume that \eqref{ccclass2} holds for $(H,\varphi_{H}^{{\rm ad}})$. Then,
\begin{itemize}
\itemindent=-32pt 
\item[]$\hspace{0.38cm}\delta_{H}\circ \varphi_{H}^{{\rm ad}}$
\item[]$=(\mu_{H}\otimes H)\circ(\mu_{H}\otimes ((\lambda_{H}\otimes\varphi_{H}^{{\rm ad}})\circ((c_{H,H}\circ\delta_{H})\otimes H)))\circ(H\otimes c_{H,H}\otimes H)\circ(\delta_{H}\otimes\delta_{H})$ {\footnotesize (by \eqref{z1})}
\item[]$=(\mu_{H}\otimes H)\circ(\mu_{H}\otimes ((\lambda_{H}\otimes\varphi_{H}^{{\rm ad}})\circ(\delta_{H}\otimes H)))\circ(H\otimes c_{H,H}\otimes H)\circ(\delta_{H}\otimes\delta_{H})$ {\footnotesize (by \eqref{ccclass2})}
\item[]$=(\varphi_{H}^{{\rm ad}}\otimes \varphi_{H}^{{\rm ad}})\circ(H\otimes c_{H,H}\otimes H)\circ(\delta_{H}\otimes\delta_{H})$ {\footnotesize (by naturality of $c$ and coassociativity of $\delta_{H}$)}, 
\end{itemize}
and, as a consequence, $\varphi_{H}^{{\rm ad}}$ is a coalgebra morphism.
	
On the other hand, let us assume that $\varphi_{H}^{{\rm ad}}$ is a coalgebra morphism. Consequently, by \eqref{z1},
\begin{itemize}
\itemindent=-32pt 
\item[]$\hspace{0.38cm}(\mu_{H}\otimes H)\circ(\mu_{H}\otimes ((\lambda_{H}\otimes\varphi_{H}^{{\rm ad}})\circ((c_{H,H}\circ\delta_{H})\otimes H)))\circ(H\otimes c_{H,H}\otimes H)\circ(\delta_{H}\otimes\delta_{H})$
\item[]$=\delta_{H}\circ \varphi_{H}^{{\rm ad}}$ {\footnotesize (by \eqref{z1})}
\item[]$=(\varphi_{H}^{{\rm ad}}\otimes\varphi_{H}^{{\rm ad}})\circ(H\otimes c_{H,H}\otimes H)\circ(\delta_{H}\otimes\delta_{H})$ {\footnotesize (by the condition of coalgebra morphism for $\varphi_{H}^{{\rm ad}}$)}
\item[]$=(\mu_{H}\otimes H)\circ(\mu_{H}\otimes((\lambda_{H}\otimes\varphi_{H}^{{\rm ad}})\circ(\delta_{H}\otimes H)))\circ(H\otimes c_{H,H}\otimes H)\circ(\delta_{H}\otimes \delta_{H})$ {\footnotesize (by coassociativity of }
\item[]$\hspace{0.38cm}${\footnotesize  $\delta_{H}$ and naturality of $c$)}. 
\end{itemize}
	
By composing the previous equality on the right with $((\lambda_{H}\otimes H)\circ\delta_{H})\otimes H$, and on the left with $\mu_{H}\otimes H$, we obtain that
\begin{align}\label{z2}
\begin{split}
&(\mu_{H}\otimes H)\circ(H\otimes((\lambda_{H}\otimes\varphi_{H}^{{\rm ad}})\circ((c_{H,H}\circ\delta_{H})\otimes H)))\circ (c_{H,H}\otimes H)\circ(H\otimes \delta_{H})\\=\,&(\mu_{H}\otimes H)\circ(H\otimes((\lambda_{H}\otimes\varphi_{H}^{{\rm ad}})\circ(\delta_{H}\otimes H)))\circ (c_{H,H}\otimes H)\circ(H\otimes \delta_{H}).
\end{split}
\end{align}
	
Besides, by composing on the right with $c_{H,H}$ and then using the naturality of $c$ and the symmetric character of ${\sf C}$, \eqref{z2} is equivalent to 
\begin{align}\label{z3}
\begin{split}
&(\mu_{H}\otimes H)\circ(H\otimes ((\lambda_{H}\otimes\varphi_{H}^{{\rm ad}})\circ ((c_{H,H}\circ\delta_{H})\otimes H)\circ c_{H,H}))\circ (\delta_{H}\otimes H)\\=\,&(\mu_{H}\otimes H)\circ(H\otimes ((\lambda_{H}\otimes\varphi_{H}^{{\rm ad}})\circ (\delta_{H}\otimes H)\circ c_{H,H}))\circ (\delta_{H}\otimes H)
\end{split}
\end{align}
\noindent since $c_{H,H}$ is an isomorphism in ${\sf C}$.
	
As a result, if we compose the equality \eqref{z3} on the right with $((\lambda_{H}\otimes H)\circ\delta_{H})\otimes H$, and on the left with $\mu_{H}\otimes H$, we conclude that 
\begin{gather*}
(\lambda_{H}\otimes\varphi_{H}^{{\rm ad}})\circ ((c_{H,H}\circ\delta_{H})\otimes H)\circ c_{H,H}=(\lambda_{H}\otimes\varphi_{H}^{{\rm ad}})\circ (\delta_{H}\otimes H)\circ c_{H,H},
\end{gather*}
which is equivalent to \eqref{ccclass2} for $(H,\varphi_{H}^{{\rm ad}})$ because $c_{H,H}$ and $\lambda_{H}$ are isomorphisms.

Observe that, in order to show that $\varphi_{H}^{{\rm ad}}$ is a coalgebra morphism if $(H,\varphi_{H}^{{\rm ad}})$ is in the cocommutativity class of $H$, it is not necessary to assume that the antipode of $H$ is an isomorphism. However, this assumption is not overly restrictive, since it holds when $H$ is profinite \cite[Corollary 1]{PuraE} or whenever the Hopf algebra $H$ is commutative or cocommutative thanks to \eqref{lambdasquareid}. 
\end{example}

\begin{definition}
	Let $H$ be a Hopf algebra and $A$ an algebra in ${\sf C}$. A right $H$-comodule $(A,\rho_{A})$ is said to be a right $H$-comodule algebra if $\eta_{A}$ and $\mu_{A}$ are $H$-colinear morphisms, that is, if the following equalities 
\begin{gather}
\label{Hcomodalg1}\rho_{A}\circ \eta_{A}=\eta_{A}\otimes \eta_{H},\\
\label{Hcomodalg2} \rho_{A}\circ \mu_{A}=(\mu_{A}\otimes H)\circ \rho_{A\otimes A},
\end{gather} 
hold, where $ \rho_{A\otimes A}\coloneqq(A\otimes A\otimes \mu_{H})\circ (A\otimes c_{H,A}\otimes H)\circ (\rho_{A}\otimes \rho_{A})$. One shows easily that equations \eqref{Hcomodalg1} and \eqref{Hcomodalg2} hold if and only if $\rho_{A}$ is an algebra morphism.
\end{definition}

\begin{definition}
Let $H$ be a Hopf algebra and let  $(A,\rho_{A})$  be a  right $H$-comodule  algebra. We define the category of left-right $(A,H)$-Doi-Hopf modules as the one whose objects are triples $(M, \varphi_{M}, \rho_{M})$ such that $(M, \varphi_{M})$ is a left $A$-module,  $(M, \rho_{M})$ is a right $H$-comodule and the following identity holds:
\begin{equation*}
\label{lrHopfmod}\rho_{M}\circ \varphi_{M}=(\varphi_{M}\otimes \mu_{H})\circ (A\otimes c_{H,M}\otimes H)\circ (\rho_{A}\otimes \rho_{M}).
\end{equation*} 

The morphisms in this category are left $A$-linear and right $H$-colinear morphisms. In what follows we will denote it by ${}_{A}{\sf M}^{H}$.
\end{definition}

If $H$ is a profinite Hopf algebra and $(A, \varphi_{A})$ is a left $H$-module algebra in ${\sf C}$, then $A$ it is also a right $H^{\ast}$-comodule algebra via 
$$\widehat{\rho}_{A}\coloneqq(\varphi_{A}\otimes H^{\ast})\circ (H\otimes c_{H^{\ast},A})\circ (a_{H}(K)\otimes A).$$

Dually, if   $(A, \rho_{A})$ is a  right $H^{\ast}$-comodule algebra, then $A$ is a  left $H$-module algebra with the action 
$$\widetilde{\varphi}_{A}\coloneqq(A\otimes b_{H}(K))\circ (\rho_{A}\otimes H)\circ c_{H,A}.$$

Therefore, if $(A, \varphi_{A})$ is a left $H$-module algebra, then we have a functor 
$${\sf S}\colon {}_{A \sharp H}{\sf Mod}\longrightarrow {}_{A}{\sf M}^{H^{\ast}}$$
defined on objects by 
$${\sf S}((M, \psi_{M}))\coloneqq(M, \varphi_{M}^{\ast}, \rho_{M}^{\ast}),$$
where $\varphi_{M}^{\ast}$, $\rho_{M}^{\ast}$ are 
\begin{gather*}
	\varphi_{M}^{\ast}\coloneqq\psi_{M}\circ (A\otimes \eta_{H}\otimes M),\\
	\rho_{M}^{\ast}\coloneqq(\psi_{M}\otimes H^{\ast})\circ (A\otimes H\otimes c_{H^{\ast},M} )\circ (\eta_{A}\otimes a_{H}(K)\otimes M),
\end{gather*}
and by the identity on morphisms. Also, there exists  another functor 
$${\sf R}\colon {}_{A}{\sf M}^{H^{\ast}}  \longrightarrow {}_{A\sharp H}{\sf Mod}$$
defined on objects by 
$${\sf R}((M, \varphi_{M},\rho_{M}))\coloneqq(M, \psi_{M}^{\sharp}\coloneqq(\varphi_{M}\otimes b_{H}(K))\circ (A\otimes ((\rho_{M}\otimes H)\circ c_{H,M}))),$$
and by the identity on morphisms.  These functors are mutually inverse, leading to the following theorem: 

\begin{theorem}
	\label{smash} Let $H$ be a profinite Hopf algebra in {\sf C}. If $(A, \varphi_{A})$ is  a left $H$-module algebra, then the categories 
	${}_{A}{\sf M}^{H^{\ast}}$ and ${}_{A \sharp H}{\sf Mod}$ are isomorphic. 
\end{theorem}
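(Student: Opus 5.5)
The plan is to check directly that the two functors ${\sf S}$ and ${\sf R}$ described before Theorem \ref{smash} are well defined and mutually inverse. Since both act as the identity on morphisms, the argument has three parts: (i) ${\sf S}$ sends $A\sharp H$-modules to Doi-Hopf modules and $A\sharp H$-linear maps to $A$-linear, $H^{\ast}$-colinear maps; (ii) ${\sf R}$ sends Doi-Hopf modules to $A\sharp H$-modules and preserves morphisms; (iii) ${\sf R}\circ{\sf S}$ and ${\sf S}\circ{\sf R}$ are the identity on objects.

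For (i), I will use the factorization $\psi_{M}=\psi_{M}\circ(\mu_{A\sharp H}\otimes M)\circ(A\otimes\eta_{H}\otimes\eta_{A}\otimes H\otimes M)$, which holds because $\Psi^{H}_{A}\circ(\eta_{H}\otimes A)=A\otimes\eta_{H}$ and $\Psi^{H}_{A}\circ(H\otimes\eta_{A})=\eta_{A}\otimes H$. With the module associativity \eqref{actionprod}, this gives $\psi_{M}=\varphi^{\ast}_{M}\circ(A\otimes\phi_{M})$, where $\phi_{M}:=\psi_{M}\circ(\eta_{A}\otimes H\otimes M)$ is an $H$-action on $M$.
\begin{itemize}
\item $\varphi^{\ast}_{M}$ is an $A$-action, since $a\mapsto a\otimes 1$ is an algebra morphism $A\to A\sharp H$.
\item $\rho^{\ast}_{M}$ is the coaction associated with the $H$-action $\phi_{M}$ through the adjunction $a_{H},b_{H}$. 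Counitality comes from unitality of $\phi_{M}$ and the definition of $\varepsilon_{H^{\ast}}$. Coassociativity comes from associativity of $\phi_{M}$ and the definition of $\delta_{H^{\ast}}$ (which involves $\mu_{H}\circ c_{H,H}$), using the triangular identities.
\item The Doi-Hopf compatibility reduces, after transposing along the adjunction, to the smash relation $(\eta_{A}\otimes h)(a\otimes\eta_{H})=\mu_{A\sharp H}\circ(\eta_{A}\otimes \Psi^{H}_{A})(h\otimes a)$. In the action form this reads $\phi_{M}\circ(H\otimes\varphi^{\ast}_{M})=\varphi^{\ast}_{M}\circ(\varphi_{A}\otimes\phi_{M})\circ(H\otimes c_{H,A}\otimes M)\circ(\delta_{H}\otimes A\otimes M)$. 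I then translate $\varphi_{A}$ into $\widehat{\rho}_{A}$ and the product of $H^{\ast}$ into $\mu_{H^{\ast}}$ (which involves $c_{H,H}\circ\delta_{H}$).
\item Morphisms: $A\sharp H$-linearity implies linearity for both restricted actions, hence $A$-linearity and $H^{\ast}$-colinearity.
\end{itemize}

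For (ii), I reverse the process. $\psi^{\sharp}_{M}$ is $\varphi_{M}\circ(A\otimes\widetilde{\phi}_{M})$, where $\widetilde{\phi}_{M}=(M\otimes b_{H}(K))\circ(\rho_{M}\otimes H)\circ c_{H,M}$ is the $H$-action induced by the $H^{\ast}$-coaction.
\begin{itemize}
\item Unitality is immediate.
\item For associativity of $\psi^{\sharp}_{M}$, it suffices that $\widetilde{\phi}_{M}$ is an $H$-action and that the commutation relation $\widetilde{\phi}_{M}\circ(H\otimes\varphi_{M})=\varphi_{M}\circ(\varphi_{A}\otimes\widetilde{\phi}_{M})\circ(H\otimes c_{H,A}\otimes M)\circ(\delta_{H}\otimes A\otimes M)$ holds. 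This is exactly the Doi-Hopf condition transposed, using $\widetilde{\varphi}_{A}=\varphi_{A}$ (the two dualizations are inverse).
\item With these in hand, a direct computation gives $\psi^{\sharp}_{M}\circ(A\otimes H\otimes\psi^{\sharp}_{M})=\psi^{\sharp}_{M}\circ(\mu_{A\sharp H}\otimes M)$.
\item Colinear $A$-linear maps commute with $\widetilde{\phi}_{M}$ by naturality of $b_{H}$, so they are $A\sharp H$-linear.
\end{itemize}

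For (iii), ${\sf R}{\sf S}(M,\psi_{M})$ has action $\varphi^{\ast}_{M}\circ(A\otimes\widetilde{\phi})$, where $\widetilde{\phi}$ is built from $\rho^{\ast}_{M}$. By the triangular identity $(b_{H}(K)\otimes\cdot)\circ a_{H}=\mathrm{id}$, this $\widetilde{\phi}$ equals $\phi_{M}$, so the action recovers $\psi_{M}$ by the factorization above. Conversely, ${\sf S}{\sf R}$ returns $\varphi_{M}$, since $\widetilde{\phi}_{M}\circ(\eta_{H}\otimes M)=\mathrm{id}$ because $\rho_{M}$ is counital. It also returns $\rho_{M}$, by the other triangular identity.

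The main obstacle is the compatibility step in (i) and its converse in (ii): matching the Doi-Hopf identity with the smash commutation rule. This requires careful bookkeeping of the symmetry $c$ against the conventions for $\mu_{H^{\ast}}$ and $\widehat{\rho}_{A}$, in particular where $c_{H,H}\circ\delta_{H}$ appears versus $\delta_{H}$. I will handle it by first proving the transposition lemma that $\rho\leftrightarrow\widetilde{\phi}$ is a bijection between right $H^{\ast}$-comodule structures and left $H$-module structures on $M$, and then transposing the whole compatibility identity at once rather than term by term.
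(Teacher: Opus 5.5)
Your proposal is correct and takes essentially the same route as the paper. The paper introduces exactly the functors ${\sf S}$ and ${\sf R}$, asserts that they are mutually inverse, and defers the details to Fischman's result in Cohen--Fischman--Montgomery. Your plan supplies the verifications left implicit there: the factorization $\psi_{M}=\varphi^{\ast}_{M}\circ(A\otimes\phi_{M})$, the transposition between $H$-actions and $H^{\ast}$-coactions along $a_{H}$ and $b_{H}$, and the identification of the Doi-Hopf compatibility with the smash commutation rule. These go through with the paper's conventions for $\mu_{H^{\ast}}$, $\delta_{H^{\ast}}$ and $\widehat{\rho}_{A}$.
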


The previous theorem is a monoidal version, with different sides in the category of Hopf modules, of a result proved by Fischman in her doctoral thesis, whose proof can be found in \cite[Proposition 1.6]{CFM}.

We now introduce the notion of a Hopf brace in a general monoidal framework. These objects have been introduced by Angiono, Galindo and Vendramin \cite{AGV} in the vector space setting as the quantum version of a skew brace \cite{GV}.
\begin{definition}
	Let $(H,\varepsilon_{H},\delta_{H})$ be a coalgebra in {\sf C} and let us assume that $H$ admits two different algebra structures in {\sf C}: $(H,\eta_{H}^{1},\mu_{H}^{1})$ and $(H,\eta_{H}^{2},\mu_{H}^{2})$. We will say that a 9-tuple
	\[(H,\eta_{H}^{1},\mu_{H}^{1},\eta_{H}^{2},\mu_{H}^{2},\varepsilon_{H},\delta_{H},\lambda_{H}^{1},\lambda_{H}^{2})\]
	is a Hopf brace in {\sf C} if the following requirements hold:
	\begin{itemize}
		\item[(i)] $H_{1}=(H,\eta_{H}^{1},\mu_{H}^{1},\varepsilon_{H},\delta_{H},\lambda_{H}^{1})$ is a Hopf algebra in {\sf C}.
		\item[(ii)]  $H_{2}=(H,\eta_{H}^{2},\mu_{H}^{2},\varepsilon_{H},\delta_{H},\lambda_{H}^{2})$ is a Hopf algebra in {\sf C}.
		\item[(iii)] The following identity involving the products $\mu_{H}^{1}$ and $\mu_{H}^{2}$ holds:
		\begin{gather}\label{compatHbrace}
			\mu_{H}^{2}\circ(H\otimes\mu_{H}^{1})=\mu_{H}^{1}\circ(\mu_{H}^{2}\otimes\Gamma_{H_{1}})\circ(H\otimes c_{H,H}\otimes H)\circ(\delta_{H}\otimes H\otimes H),
		\end{gather}
		where 
		\begin{equation}\label{def_GammaH1}
			\Gamma_{H_{1}}\coloneqq \mu_{H}^{1}\circ(\lambda_{H}^{1}\otimes\mu_{H}^{2})\circ(\delta_{H}\otimes H).
		\end{equation}
	\end{itemize}
	
	Following the notation introduced in \cite{RGON}, we will denote Hopf braces by $\mathbb{H}=(H_{1},H_{2})$ or, when there is no confusion between the Hopf algebras involved, only by $\mathbb{H}$. A Hopf brace $\mathbb{H}$ is said to be cocommutative if $c_{H,H}\circ\delta_{H}=\delta_{H}$.
	
	These structures constitute a category whose morphisms are defined as follows: Given another Hopf brace $\mathbb{B}=(B_{1},B_{2})$, a morphism $f\colon \mathbb{H}\rightarrow \mathbb{B}$ is a morphism of Hopf braces if $f\colon H_{1}\rightarrow B_{1}$ and $f\colon H_{2}\rightarrow B_{2}$ are both Hopf algebra morphisms in ${\sf C}$. The category of Hopf braces will be denoted by ${\sf HBr}$. Cocommutative Hopf braces form a full subcategory of ${\sf HBr}$ which we will denote by ${\sf cocHBr}$.
\end{definition}
\begin{example}[Skew braces]\label{skewbraces}
	Since a Hopf algebra in the symmetric monoidal category of sets, {\sf Set}, is just a group, particularizing the previous notion of a Hopf brace with ${\sf C}={\sf Set}$ yields the notion of a skew brace, in which we have a pair of groups, $(G,\cdot)$ and $(G,\circ)$, satisfying the equation 
	\[a\circ(b\cdot c)=(a\circ b)\cdot a^{-1}\cdot(a\circ c), \textnormal{ for all }a,b,c\in G,\]
	which corresponds to \eqref{compatHbrace} in this setting. We will denote skew braces by $(G,\cdot,\circ)$ and, if we want to refer only to the first group structure involved, we will identify $(G,\cdot)=G_{\cdot}$, while for the second we will use $(G,\circ)=G_{\circ}$.  If $G_{\cdot}$  is an abelian group, then a skew brace is a brace in the sense of Rump \cite{Rump}.
\end{example}
\begin{example}\label{galg}
Let $\mathbb{K}$ be a field. By linearization, it is well know that if $(G,\cdot,\circ)$ is a skew brace, then the respective group algebras of $G_{\cdot}$ and $G_{\circ}$, $\mathbb{K}[G_{\cdot}]$ and $\mathbb{K}[G_{\circ}]$, constitute a Hopf brace in ${\sf C}={}_{\mathbb{K}}{\sf Vect}$, denoted by $\mathbb{K}[\mathbb{G}]=(\mathbb{K}[G_{\cdot}],\mathbb{K}[G_{\circ}])$.
\end{example}
\begin{example}
	Any Hopf algebra $H=(H,\eta_{H},\mu_{H},\varepsilon_{H},\delta_{H},\lambda_{H})$ leads to a trivial Hopf brace in ${\sf C}$: $\mathbb{H}_{{\rm triv}}=(H,H)$. That is because, when both Hopf algebras coincide in a Hopf brace structure, the compatibility condition \eqref{compatHbrace} always holds.
\end{example}

Given a Hopf brace $\mathbb{H}=(H_{1},H_{2})$, it is satisfied that $\eta_{H}^{1}=\eta_{H}^{2}$ \cite[Remark 1.3]{AGV}. Therefore, from now on we will denote both units by $\eta_{H}$. Besides, $(H_{1},\Gamma_{H_{1}})$ is a left $H_{2}$-module algebra \cite[Lemma 1.8]{AGV} and, reciprocally, this property characterizes the Hopf brace structure. Indeed, given a pair of Hopf algebras sharing the same underlying coalgebra, $H_{1}=(H,\eta_{H}^{1},\mu_{H}^{1},\varepsilon_{H},\delta_{H},\lambda_{H}^{1})$ and $H_{2}=(H,\eta_{H}^{2},\mu_{H}^{},\varepsilon_{H},\delta_{H},\lambda_{H}^{2})$, then 
\begin{center}
	$\mathbb{H}=(H_{1},H_{2})\in{\sf HBr}\iff \mu_{H}^{1}\circ(\Gamma_{H_{1}}\otimes\Gamma_{H_{1}})\circ(H\otimes c_{H,H}\otimes H)\circ(\delta_{H}\otimes H\otimes H)=\Gamma_{H_{1}}\circ(H\otimes\mu_{H}^{1}),$
\end{center}
as can be consulted in \cite[Theorem 1.16]{RGONRAMOS}. Moreover, by \eqref{def_GammaH1}, coassociativity of $\delta_{H}$, associativity of $\mu_{H}^{1}$ and \eqref{antipode} for $H_{1}$, we obtain the following expression for $\mu_{H}^{2}$:
\begin{gather}\label{mu2-exp}
	\mu_{H}^{2}=\mu_{H}^{1}\circ(H\otimes \Gamma_{H_{1}})\circ(\delta_{H}\otimes H).
\end{gather}

Furthermore, the cocommutativity class also provides a sufficient condition for $\Gamma_{H_{1}}$ to be a coalgebra morphism.
\begin{theorem}
	Let $\mathbb{H}=(H_{1},H_{2})\in{\sf HBr}$. If $(H_{1},\Gamma_{H_{1}})$ belongs to the cocommutativity class of $H_{2}$, then $\Gamma_{H_{1}}$ is a coalgebra morphism.
\end{theorem}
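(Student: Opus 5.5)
We need to show that $\varepsilon_{H}\circ\Gamma_{H_{1}}=\varepsilon_{H}\otimes\varepsilon_{H}$ and $\delta_{H}\circ\Gamma_{H_{1}}=(\Gamma_{H_{1}}\otimes\Gamma_{H_{1}})\circ(H\otimes c_{H,H}\otimes H)\circ(\delta_{H}\otimes\delta_{H})$. The counit condition is immediate: $\varepsilon_{H}$ is multiplicative for both $\mu_{H}^{1}$ and $\mu_{H}^{2}$, and $\varepsilon_{H}\circ\lambda_{H}^{1}=\varepsilon_{H}$. Then counit properties collapse $\varepsilon_{H}\circ\mu_{H}^{1}\circ(\lambda_{H}^{1}\otimes\mu_{H}^{2})\circ(\delta_{H}\otimes H)$ to $\varepsilon_{H}\otimes\varepsilon_{H}$.

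For comultiplicativity, I will mimic the computation \eqref{z1} of Example \ref{ccclass_adjoint}. The structure of $\Gamma_{H_{1}}=\mu_{H}^{1}\circ(\lambda_{H}^{1}\otimes\mu_{H}^{2})\circ(\delta_{H}\otimes H)$ is a ``twisted adjoint'' formula: $\Gamma(g\otimes h)=S(g_{1})\cdot(g_{2}\circ h)$. The steps are as follows.
\begin{enumerate}
\item Apply $\delta_{H}$ and use that $\mu_{H}^{1}$ and $\mu_{H}^{2}$ are coalgebra morphisms, together with \eqref{a-antip2} for $\lambda_{H}^{1}$.
\item Use coassociativity, naturality and the symmetry of $c$ to rearrange the result.
\end{enumerate}
In Sweedler-like notation this gives
\[
\delta(\Gamma(g\otimes h))=S(g_{2})\cdot(g_{3}\circ h_{1})\otimes S(g_{1})\cdot(g_{4}\circ h_{2}).
\]
The target is instead
\[
\Gamma(g_{1}\otimes h_{1})\otimes\Gamma(g_{2}\otimes h_{2})=S(g_{1})\cdot(g_{2}\circ h_{1})\otimes S(g_{3})\cdot(g_{4}\circ h_{2}).
\]
The discrepancy is exactly the flip in the outer legs of the coproduct of $g$, which is where the cocommutativity class must be used.

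To exploit the hypothesis, I will write the first expression as
\[
\bigl(\mu_{H}^{1}\otimes H\bigr)\circ\Bigl(\lambda_{H}^{1}\otimes\bigl((H\otimes \mu_{H}^{1})\circ(H\otimes\lambda_{H}^{1}\otimes H)\circ\ldots\bigr)\Bigr).
\]
The aim is to isolate a factor $(H\otimes\Gamma_{H_{1}})\circ(\delta_{H}\otimes H)$ in the variables $(g_{2}\ldots)$ and $(g_{1},g_{4},h_{2})$. Concretely, set $x=g_{1}$ and $y=g_{2}$ from a coproduct $(g_{1},g_{2})$, and expand $y$ into $y_{1},y_{2},y_{3}$ so that the expression reads
\[
S(y_{1})\cdot(y_{2}\circ h_{1})\otimes \Gamma(\ldots).
\]
I expect this regrouping to be the main obstacle, since the terms are interleaved. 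My first attempt will be to apply \eqref{ccclass2}, in the form $g_{1}\otimes\Gamma(g_{2}\otimes k)=g_{2}\otimes\Gamma(g_{1}\otimes k)$, to the pair $(g_{1}g_{2}g_{3},\,g_{4})$ after grouping $g_{1},g_{2},g_{3}$ via coassociativity into a single leg. This swaps which coproduct leg feeds the second $\Gamma$: starting from the leg decomposition $(G,g_{4})$ with $G=g_{(1)}$, one obtains $\Gamma(G\otimes h_{2})$ with the remaining leg split.

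After the swap, I will re-expand with coassociativity and check that the legs match the target ordering $S(g_{1})\cdot(g_{2}\circ h_{1})\otimes S(g_{3})\cdot(g_{4}\circ h_{2})$. If a direct swap does not align the legs, my fallback is the trick from Example \ref{ccclass_adjoint}. There, one rewrites $\mu_{H}^{1}\circ(\lambda^{1}_{H}\otimes -)$ terms using \eqref{antipode} to move factors across, and then applies \eqref{ccclass2} to the nested coproduct $(\delta_{H}\otimes H)\circ\delta_{H}$ of the first leg. This converts the coproduct ordering $(g_{2},g_{3},g_{1},g_{4})$ into $(g_{1},g_{2},g_{3},g_{4})$, possibly requiring two applications of the hypothesis. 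Note that no invertibility of the antipode should be needed, consistent with the remark in Example \ref{ccclass_adjoint} that the sufficiency direction does not require it.
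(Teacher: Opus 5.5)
\textbf{There is a genuine gap, at the one nontrivial step.} Your counit check is correct. So is your Sweedler expansion $\delta(\Gamma_{H_{1}}(g\otimes h))=S(g_{2})\cdot(g_{3}\circ h_{1})\otimes S(g_{1})\cdot(g_{4}\circ h_{2})$, which matches the paper's first two steps, and your overall strategy (expand, then use the hypothesis to repair a flip) is the paper's. What is missing is the step that exposes a $\Gamma_{H_{1}}$ to which the hypothesis can actually be applied. You do not carry it out, and the concrete plan you give for it would fail.

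\textbf{Where the plan goes wrong.} The intermediate form you aim for, $S(y_{1})\cdot(y_{2}\circ h_{1})\otimes\Gamma(\ldots)$, puts the action in the wrong tensor factor.
\begin{itemize}
\item In $S(g_{2})\cdot(g_{3}\circ h_{1})\otimes S(g_{1})\cdot(g_{4}\circ h_{2})$, the second factor is not of the form $\Gamma_{H_{1}}(\,\cdot\otimes h_{2})$, because its legs $g_{1}$ and $g_{4}$ are not adjacent. The first factor, however, already is.
\item This is exactly where mimicking \eqref{z1} misleads you. For $\varphi_{H}^{{\rm ad}}$ the antipode sits on the right, so the action appears in the second factor. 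For $\Gamma_{H_{1}}$ the antipode sits on the left, so it appears in the first.
\item Your first attempt, applying \eqref{ccclass2} to the split $(g_{1}g_{2}g_{3},\,g_{4})$, has nothing to act on. The leg $g_{4}$ enters only through $\mu_{H}^{2}$ in $g_{4}\circ h_{2}$, not through $\Gamma_{H_{1}}$, so there is no ``second $\Gamma$'' whose input could be swapped.
\item The fallback via \eqref{antipode} is left unspecified, and it is not needed.
\end{itemize}

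\textbf{The missing observation.} By coassociativity and naturality of $c$ one can rewrite
\begin{align*}
\delta_{H}\circ\Gamma_{H_{1}}=\,&(H\otimes(\mu_{H}^{1}\circ(\lambda_{H}^{1}\otimes H)))\circ(((\Gamma_{H_{1}}\otimes H)\circ(H\otimes c_{H,H})\circ((c_{H,H}\circ\delta_{H})\otimes H))\otimes\mu_{H}^{2})\\
&\circ(H\otimes c_{H,H}\otimes H)\circ(\delta_{H}\otimes\delta_{H}).
\end{align*}
In Sweedler notation this reads $\Gamma_{H_{1}}(g_{2}\otimes h_{1})\otimes S(g_{1})\cdot(g_{3}\circ h_{2})$. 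The argument then finishes as follows.
\begin{enumerate}
\item Apply \eqref{ccclass} for $(H_{1},\Gamma_{H_{1}})$ once, with $M=H$. This replaces $c_{H,H}\circ\delta_{H}$ by $\delta_{H}$ and gives $\Gamma_{H_{1}}(g_{1}\otimes h_{1})\otimes S(g_{2})\cdot(g_{3}\circ h_{2})$.
\item Re-expand the last factor using coassociativity and \eqref{def_GammaH1}. This yields $(\Gamma_{H_{1}}\otimes\Gamma_{H_{1}})\circ(H\otimes c_{H,H}\otimes H)\circ(\delta_{H}\otimes\delta_{H})$.
\end{enumerate}
So the hypothesis is not applied to the pair (first three legs, $g_{4}$). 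It is applied inside the first three legs, to their split into $g_{1}$ versus $g_{2},g_{3}$, with $g_{4}$ a spectator. One application suffices, and no antipode manipulation or invertibility of $\lambda_{H}^{1}$ is required.
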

\begin{proof}
	It can be easily verified that $\varepsilon_{H}\circ\Gamma_{H_{1}}=\varepsilon_{H}\otimes\varepsilon_{H}$. Therefore, it only remains to show that $\Gamma_{H_{1}}$ preserves the coproduct. Indeed,
	\begin{itemize}
		\itemindent=-32pt 
		\item[ ]$\hspace{0.38cm}\delta_{H}\circ\Gamma_{H_{1}}$
		\item[] $=(\mu_{H}^{1}\otimes\mu_{H}^{1})\circ(H\otimes c_{H,H}\otimes H)\circ((\delta_{H}\circ\lambda_{H}^{1})\otimes(\delta_{H}\circ\mu_{H}^{2}))\circ(\delta_{H}\otimes H)$ {\footnotesize (by the condition of coalgebra}
		\item[]$\hspace{0.38cm}${\footnotesize morphism for $\mu_{H}^{1}$)}
		\item[] $=(\mu_{H}^{1}\otimes\mu_{H}^{1})\circ(H\otimes c_{H,H}\otimes H)\circ((c_{H,H}\circ(\lambda_{H}^{1}\otimes\lambda_{H}^{1})\circ\delta_{H})\otimes((\mu_{H}^{2}\otimes\mu_{H}^{2})\circ(H\otimes c_{H,H}\otimes H)$
		\item[]$\hspace{0.38cm}\circ(\delta_{H}\otimes\delta_{H})))\circ(\delta_{H}\otimes H)$ {\footnotesize(by the condition of coalgebra morphism for $\mu_{H}^{2}$ and \eqref{a-antip2})}
		\item[]$=(H\otimes (\mu_{H}^{1}\circ(\lambda_{H}^{1}\otimes H)))\circ(((\Gamma_{H_{1}}\otimes H)\circ(H\otimes c_{H,H})\circ((c_{H,H}\circ\delta_{H})\otimes H))\otimes\mu_{H}^{2})$
		\item[] $\hspace{0.38cm}\circ(H\otimes c_{H,H}\otimes H)\circ(\delta_{H}\otimes\delta_{H})$ {\footnotesize(by coassociativity of $\delta_{H}$, naturality of $c$ and \eqref{def_GammaH1})}
		\item[] $=(H\otimes (\mu_{H}^{1}\circ(\lambda_{H}^{1}\otimes H)))\circ(((\Gamma_{H_{1}}\otimes H)\circ(H\otimes c_{H,H})\circ(\delta_{H}\otimes H))\otimes\mu_{H}^{2})\circ(H\otimes c_{H,H}\otimes H)\circ(\delta_{H}\otimes\delta_{H})$ 
		\item[]$\hspace{0.38cm}${\footnotesize(by \eqref{ccclass} for $(H_{1},\Gamma_{H_{1}})$)}
		\item[] $=(\Gamma_{H_{1}}\otimes\Gamma_{H_{1}})\circ(H\otimes c_{H,H}\otimes H)\circ(\delta_{H}\otimes\delta_{H})$ {\footnotesize (by coassociativity of $\delta_{H}$ and naturality of $c$)}.\qedhere
	\end{itemize}
\end{proof}

By the naturality of $c$, the coassociativity of $\delta_{H}$ and the associativity of $\mu_{H}^{1}$, we obtain that
\begin{align*}
	&\mu_{H}^{1}\circ(\mu_{H}^{2}\otimes\Gamma_{H_{1}})\circ(H\otimes c_{H,H}\otimes H)\circ(\delta_{H}\otimes H\otimes H)\\=\,&\mu_{H}^{1}\circ(\Gamma'_{H_{1}}\otimes\mu_{H}^{2})\circ(H\otimes c_{H,H}\otimes H)\circ(\delta_{H}\otimes H\otimes H),
\end{align*}
where \begin{equation}\label{GammaprimadefH1}\Gamma'_{H_{1}}\coloneqq \mu_{H}^{1}\circ(\mu_{H}^{2}\otimes\lambda_{H}^{1})\circ(H\otimes c_{H,H})\circ(\delta_{H}\otimes H).\end{equation}

Therefore, \eqref{compatHbrace} is equivalent to
\begin{gather}\label{compatHprimabrace}
\mu_{H}^{2}\circ(H\otimes\mu_{H}^{1})=\mu_{H}^{1}\circ(\Gamma'_{H_{1}}\otimes\mu_{H}^{2})\circ(H\otimes c_{H,H}\otimes H)\circ(\delta_{H}\otimes H\otimes H).
\end{gather}

The properties satisfied by $\Gamma'_{H_{1}}$ are very similar to those satisfied by $\Gamma_{H_{1}}$. It is straightforward to show that $\mu_{H}^{2}$ admits the following expression in terms of $\Gamma'_{H_{1}}$:
\begin{gather}\label{mu2prime-exp}
	\mu_{H}^{2}=\mu_{H}^{1}\circ(\Gamma'_{H_{1}}\otimes H)\circ(H\otimes c_{H,H})\circ(\delta_{H}\otimes H).
\end{gather}

Besides, $(H_{1},\Gamma'_{H_{1}})$ is also a left $H_{2}$-module algebra \cite[Theorem 6.4]{BRZ}, and this property characterizes the Hopf brace structure as can be seen as follows. 
\begin{theorem}\label{equiv2HBr}
	Let $H_{1}$ and $H_{2}$ be two Hopf algebras in ${\sf C}$ with the same underlying coalgebra, $$H_{i}=(H,\eta_{H}^{i},\mu_{H}^{i},\varepsilon_{H},\delta_{H},\lambda_{H}^{i})$$ for $i=1,2$. Consider the morphism $\Gamma'_{H_{1}}$ defined as in \eqref{GammaprimadefH1}. The following statements are equivalent:
	\begin{itemize}
		\item[(i)] $\mathbb{H}=(H_{1},H_{2})$ is a Hopf brace in ${\sf C}$,
		\item[(ii)] $\Gamma'_{H_{1}}\circ(H\otimes\mu_{H}^{1})=\mu_{H}^{1}\circ (\Gamma'_{H_{1}}\otimes\Gamma'_{H_{1}})\circ(H\otimes c_{H,H}\otimes H)\circ(\delta_{H}\otimes H\otimes H)$.
	\end{itemize}
\end{theorem}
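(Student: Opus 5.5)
The plan is to mimic the known proof of the analogous characterization for $\Gamma_{H_{1}}$ \cite[Theorem 1.16]{RGONRAMOS}, working with $\Gamma'_{H_{1}}$ and the equivalent form \eqref{compatHprimabrace} of the brace axiom. Heuristically, in Sweedler notation $\Gamma'_{H_{1}}(g\otimes h)=(g_{1}\circ h)\cdot S(g_{2})$. Condition (ii) then says $(g\circ(h\cdot k))\cdot S(g)=((g_{1}\circ h)\cdot S(g_{2}))\cdot((g_{3}\circ k)\cdot S(g_{4}))$. The brace identity is recovered from this by multiplying on the right with $g$.

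For (i) $\Rightarrow$ (ii), I would substitute \eqref{compatHprimabrace} into $\Gamma'_{H_{1}}\circ(H\otimes\mu_{H}^{1})=\mu_{H}^{1}\circ(\mu_{H}^{2}\otimes\lambda_{H}^{1})\circ(H\otimes c_{H,H})\circ(\delta_{H}\otimes\mu_{H}^{1})$. This replaces $\mu_{H}^{2}\circ(H\otimes\mu_{H}^{1})$ by $\mu_{H}^{1}\circ(\Gamma'_{H_{1}}\otimes\mu_{H}^{2})\circ(H\otimes c_{H,H}\otimes H)\circ(\delta_{H}\otimes H\otimes H)$. Using coassociativity, naturality of $c$ and associativity of $\mu_{H}^{1}$, the remaining factor $\mu_{H}^{1}\circ(\mu_{H}^{2}\otimes\lambda_{H}^{1})\circ(H\otimes c_{H,H})\circ(\delta_{H}\otimes H)$ on the last tensorand is exactly $\Gamma'_{H_{1}}$ applied to $(g_{2},k)$. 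This gives (ii) directly. Alternatively one can cite \cite[Theorem 6.4]{BRZ}, since (ii) is part of the module-algebra property of $(H_{1},\Gamma'_{H_{1}})$.

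For (ii) $\Rightarrow$ (i), I would first prove the reconstruction formula \eqref{mu2prime-exp}, namely $\mu_{H}^{1}\circ(\Gamma'_{H_{1}}\otimes H)\circ(H\otimes c_{H,H})\circ(\delta_{H}\otimes H)=\mu_{H}^{2}$. This needs only the antipode identity $\lambda_{H}^{1}\ast{\rm id}_{H}=\varepsilon_{H}\otimes\eta_{H}$, coassociativity and naturality of $c$, and no brace axiom. Then I would start from the right-hand side of \eqref{compatHprimabrace}, expand $\mu_{H}^{2}$ on the last factor via \eqref{mu2prime-exp}, and regroup by associativity of $\mu_{H}^{1}$ and coassociativity. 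The result is $\mu_{H}^{1}\circ(\mu_{H}^{1}\circ(\Gamma'_{H_{1}}\otimes\Gamma'_{H_{1}})\circ(H\otimes c_{H,H}\otimes H)\circ(\delta_{H}\otimes H\otimes H)\otimes H)$, with the last input a copy of $g$ moved by symmetries. Applying (ii) turns this into $\mu_{H}^{1}\circ((\Gamma'_{H_{1}}\circ(H\otimes\mu_{H}^{1}))\otimes H)\circ(H\otimes c_{H,H\otimes H})\circ(\delta_{H}\otimes H\otimes H)$. By \eqref{mu2prime-exp} once more, applied to the pair $(g,h\cdot k)$, this equals $\mu_{H}^{2}\circ(H\otimes\mu_{H}^{1})$, which is \eqref{compatHprimabrace}. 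Since \eqref{compatHprimabrace} is equivalent to \eqref{compatHbrace}, this proves (i).

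The main obstacle is the bookkeeping in the middle step of (ii) $\Rightarrow$ (i). The copy of $g$ produced by expanding $\mu_{H}^{2}$ has to be pushed past $\Gamma'_{H_{1}}(g_{2}\otimes k)$ to the far right while keeping the coproduct legs in the correct order. No cocommutativity is available, so I would organize the comultiplications carefully as $(\delta_{H}\otimes H)\circ\delta_{H}$ with explicit symmetry morphisms, checking that each permutation arises by naturality of $c$ alone.
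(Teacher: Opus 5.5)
Your proposal is correct and matches the paper's proof: for (ii)$\Rightarrow$(i) the paper also expands $\mu_{H}^{2}$ on the right-hand side of \eqref{compatHprimabrace} via \eqref{mu2prime-exp}, regroups into $\mu_{H}^{1}\circ(\Gamma'_{H_{1}}\otimes\Gamma'_{H_{1}})\circ(H\otimes c_{H,H}\otimes H)\circ(\delta_{H}\otimes H\otimes H)$ multiplied on the right by the remaining leg of $g$, applies (ii), and collapses back with \eqref{mu2prime-exp}. The only difference is that the paper takes (i)$\Rightarrow$(ii) from \cite[Theorem 6.4]{BRZ}, while your direct derivation $\Gamma'_{H_{1}}(g\otimes h\cdot k)=\Gamma'_{H_{1}}(g_{1}\otimes h)\cdot(g_{2}\circ k)\cdot S(g_{3})=\Gamma'_{H_{1}}(g_{1}\otimes h)\cdot\Gamma'_{H_{1}}(g_{2}\otimes k)$ is a valid self-contained substitute (and you rightly note that \eqref{mu2prime-exp} does not use the brace axiom).
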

\begin{proof}
	It only remains to prove (ii)$\implies$(i). Let us show that \eqref{compatHprimabrace} holds. Indeed,
	\begin{itemize}
		\itemindent=-32pt 
		\item[ ]$\hspace{0.38cm} \mu_{H}^{1}\circ(\Gamma'_{H_{1}}\otimes\mu_{H}^{2})\circ(H\otimes c_{H,H}\otimes H)\circ(\delta_{H}\otimes H\otimes H)$
		\item[]$=\mu_{H}^{1}\circ(\Gamma'_{H_{1}}\otimes(\mu_{H}^{1}\circ(\Gamma'_{H_{1}}\otimes H)\circ(H\otimes c_{H,H})\circ(\delta_{H}\otimes H)))\circ(H\otimes c_{H,H}\otimes H)\circ(\delta_{H}\otimes H\otimes H)$
		\item[]$\hspace{0.38cm}${\footnotesize (by \eqref{mu2prime-exp})}
		\item[] $=\mu_{H}^{1}\circ((\mu_{H}^{1}\circ (\Gamma'_{H_{1}}\otimes\Gamma'_{H_{1}})\circ(H\otimes c_{H,H}\otimes H)\circ(\delta_{H}\otimes H\otimes H))\otimes H)\circ(H\otimes((H\otimes c_{H,H})$
		\item[]$\hspace{0.38cm}\circ(c_{H,H}\otimes H)))\circ(\delta_{H}\otimes H\otimes H)$ {\footnotesize(by associativity of $\mu_{H}^{1}$, the naturality of $c$ and coassociativity of $\delta_{H}$)}
		\item[] $=\mu_{H}^{1}\circ(\Gamma'_{H_{1}}\otimes H)\circ(H\otimes((\mu_{H}^{1}\otimes H)\circ(H\otimes c_{H,H})\circ(c_{H,H}\otimes H)))\circ(\delta_{H}\otimes H\otimes H)$ {\footnotesize (by (ii))}
		\item[] $=\mu_{H}^{1}\circ(\Gamma'_{H_{1}}\otimes H)\circ(H\otimes c_{H,H})\circ(\delta_{H}\otimes\mu_{H}^{1})$ {\footnotesize (by naturality of $c$)}
		\item[] $=\mu_{H}^{2}\circ(H\otimes\mu_{H}^{1})$ {\footnotesize (by \eqref{mu2prime-exp})}.\qedhere
	\end{itemize}
\end{proof}
To conclude, the cocommutativity class gives a sufficient condition for $\Gamma'_{H_{1}}$ to be a coalgebra morphism.
\begin{theorem}
	Let $\mathbb{H}=(H_{1},H_{2})\in{\sf HBr}$. If $(H_{1},\Gamma'_{H_{1}})$ belongs to the cocommutativity class of $H_{2}$, then $\Gamma'_{H_{1}}$ is a coalgebra morphism.
\end{theorem}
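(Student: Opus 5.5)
The proof will mirror the one just given for $\Gamma_{H_{1}}$, with the order of the tensor factors reversed. First I will check that $\Gamma'_{H_{1}}$ preserves the counit. Since $\varepsilon_{H}$ is an algebra morphism for both $\mu_{H}^{1}$ and $\mu_{H}^{2}$, and $\varepsilon_{H}\circ\lambda_{H}^{1}=\varepsilon_{H}$, the counit property gives $\varepsilon_{H}\circ\Gamma'_{H_{1}}=\varepsilon_{H}\otimes\varepsilon_{H}$. The remaining task is to show that
\[
\delta_{H}\circ\Gamma'_{H_{1}}=(\Gamma'_{H_{1}}\otimes\Gamma'_{H_{1}})\circ(H\otimes c_{H,H}\otimes H)\circ(\delta_{H}\otimes\delta_{H}).
\]

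To compute $\delta_{H}\circ\Gamma'_{H_{1}}$, I will use the definition \eqref{GammaprimadefH1}, that is, $\Gamma'_{H_{1}}(g\otimes h)=(g_{1}\circ h)\cdot\lambda_{H}^{1}(g_{2})$ in informal notation. I will then apply three facts in turn. First, $\mu_{H}^{1}$ is a coalgebra morphism. Second, $\mu_{H}^{2}$ is a coalgebra morphism, so $\delta_{H}\circ\mu_{H}^{2}=(\mu_{H}^{2}\otimes\mu_{H}^{2})\circ(H\otimes c_{H,H}\otimes H)\circ(\delta_{H}\otimes\delta_{H})$. Third, the antipode is anticomultiplicative, $\delta_{H}\circ\lambda_{H}^{1}=(\lambda_{H}^{1}\otimes\lambda_{H}^{1})\circ c_{H,H}\circ\delta_{H}$, by \eqref{a-antip2}. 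Informally, this gives
\[
\delta\Gamma'(g\otimes h)=(g_{1}\circ h_{1})\cdot\lambda^{1}(g_{4})\otimes(g_{2}\circ h_{2})\cdot\lambda^{1}(g_{3}).
\]

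Next I will regroup this using coassociativity and naturality of $c$. The second tensor factor becomes $\Gamma'_{H_{1}}$ evaluated on the middle coproduct components $g_{2}\otimes h_{2}$. What is left in the first factor has the shape $\mu_{H}^{1}\circ(\mu_{H}^{2}\otimes\lambda_{H}^{1})$, where the outer legs $g_{1}$ and $g_{4}$ of the coproduct of $g$ play the roles of $g$ and the "flipped" component. In morphism form, I expect an expression of the type
\[
((\mu_{H}^{1}\circ(\mu_{H}^{2}\otimes\lambda_{H}^{1}))\otimes H)\circ(\ldots)\circ\bigl((H\otimes\Gamma'_{H_{1}})\circ((c_{H,H}\circ\delta_{H})\otimes H)\bigr)\ldots
\]
Here the cocommutativity-class hypothesis appears in its form \eqref{ccclass2} for $(H,\Gamma'_{H_{1}})$, applied to the piece $g_{2}\otimes g_{3}\otimes h_{2}$ together with the adjacent $\lambda^{1}(g_{3})$. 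More precisely, I will isolate the subterm $(H\otimes\Gamma'_{H_{1}})\circ((c_{H,H}\circ\delta_{H})\otimes H)$ acting on the middle copy $g_{23}$ and $h_{2}$. By \eqref{ccclass2} I can replace $c_{H,H}\circ\delta_{H}$ by $\delta_{H}$ there, which swaps $g_{2}$ and $g_{3}$. The expression then reads
\[
(g_{1}\circ h_{1})\cdot\lambda^{1}(g_{4})\otimes(g_{3}\circ h_{2})\cdot\lambda^{1}(g_{2})\ \text{reordered}\ \longrightarrow\ (g_{1}\circ h_{1})\cdot\lambda^{1}(g_{2})\otimes(g_{3}\circ h_{2})\cdot\lambda^{1}(g_{4}).
\]
After that, coassociativity and naturality of $c$ assemble this as $\Gamma'_{H_{1}}(g_{1}\otimes h_{1})\otimes\Gamma'_{H_{1}}(g_{2}\otimes h_{2})$, which is the desired right-hand side.

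The main obstacle is this middle step: arranging the symmetries so that $\Gamma'_{H_{1}}$ appears intact on the inner legs next to a flipped coproduct, so that the hypothesis applies. The difficulty is that the hypothesis only swaps two legs that feed the same $\Gamma'$-block together with an outer leg. I will try first to factor the output of the coproduct of $g$ as $(\delta_{H}\otimes H)\circ\delta_{H}$ on the outer pair and $\delta_{H}$ on the inner pair. Since the legs feeding the inner $\Gamma'_{H_{1}}$ are $g_{2}$ and $g_{3}$, the hypothesis then applies directly. If that fails, I will instead take the mirror of the computation in the preceding theorem, using the symmetric version \eqref{ccclass}.
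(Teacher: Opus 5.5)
\textbf{Verdict.} Your overall route is the paper's. The paper omits the details and says they follow Example \ref{ccclass_adjoint}: the analogue of \eqref{z1}, followed by \eqref{ccclass2}. Your counit check, your expansion $(g_{1}\circ h_{1})\cdot\lambda^{1}(g_{4})\otimes(g_{2}\circ h_{2})\cdot\lambda^{1}(g_{3})$, and the morphism shape you anticipate are all correct.

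\textbf{The gap.} It lies in how you use the hypothesis, which is the only nontrivial step. You say \eqref{ccclass2} ``swaps $g_{2}$ and $g_{3}$'', producing $(g_{1}\circ h_{1})\cdot\lambda^{1}(g_{4})\otimes(g_{3}\circ h_{2})\cdot\lambda^{1}(g_{2})$. You then ``reorder'' to move $\lambda^{1}(g_{2})$ into the first tensor factor and $\lambda^{1}(g_{4})$ into the second. Neither move is justified.

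In Sweedler form, \eqref{ccclass2} for $(H_{1},\Gamma'_{H_{1}})$ reads $x_{1}\otimes\Gamma'_{H_{1}}(x_{2}\otimes h)=x_{2}\otimes\Gamma'_{H_{1}}(x_{1}\otimes h)$. It exchanges the \emph{entire} first input of $\Gamma'_{H_{1}}$ with a free leg. It says nothing about the two legs that $\Gamma'_{H_{1}}$ produces internally from that input.

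The subsequent exchange of $g_{2}$ and $g_{4}$ across the two tensor factors is a cocommutativity-type move on $\delta_{H}$. That is exactly what is not assumed. For the same reason, the factorization in your last paragraph, which isolates $\delta_{H}$ on the inner pair $g_{2}\otimes g_{3}$, isolates the wrong coproduct: that coproduct is internal to $\Gamma'_{H_{1}}$.

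\textbf{The fix.} Apply the hypothesis once, to the coproduct separating the whole $\Gamma'_{H_{1}}$-input from the leg $g_{4}$ that feeds $\lambda_{H}^{1}$. Write
\[
\delta_{H}\circ\Gamma'_{H_{1}}=(\mu_{H}^{1}\otimes H)\circ(\mu_{H}^{2}\otimes((\lambda_{H}^{1}\otimes\Gamma'_{H_{1}})\circ((c_{H,H}\circ\delta_{H})\otimes H)))\circ(H\otimes c_{H,H}\otimes H)\circ(\delta_{H}\otimes\delta_{H}).
\]
In Sweedler notation this is $(g_{1}\circ h_{1})\cdot\lambda^{1}(y_{2})\otimes\Gamma'_{H_{1}}(y_{1}\otimes h_{2})$, where $y=g_{2}$ in a two-fold splitting of $g$.

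Replacing $c_{H,H}\circ\delta_{H}$ by $\delta_{H}$ via \eqref{ccclass2} gives $(g_{1}\circ h_{1})\cdot\lambda^{1}(y_{1})\otimes\Gamma'_{H_{1}}(y_{2}\otimes h_{2})$. In four-fold notation this is $(g_{1}\circ h_{1})\cdot\lambda^{1}(g_{2})\otimes(g_{3}\circ h_{2})\cdot\lambda^{1}(g_{4})$. In one step, the $\lambda^{1}$-leg moves from $g_{4}$ to $g_{2}$ and the $\Gamma'_{H_{1}}$-input shifts from $(g_{2},g_{3})$ to $(g_{3},g_{4})$; no further reordering is needed.

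Coassociativity then gives $(\Gamma'_{H_{1}}\otimes\Gamma'_{H_{1}})\circ(H\otimes c_{H,H}\otimes H)\circ(\delta_{H}\otimes\delta_{H})$. With this correction your argument is complete.
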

\begin{proof}
	Let $H=(H,\eta_{H},\mu_{H},\varepsilon_{H},\delta_{H},\lambda_{H})$ be a Hopf algebra in ${\sf C}$. Note that, in the case that $\mathbb{H}=\mathbb{H}_{{\rm triv}}=(H,H)$, the action $\Gamma'_{H}=\varphi_{H}^{{\rm ad}}$. Therefore, we omit a detailed proof here for this result since the steps to follow are analogous to those in Example \ref{ccclass_adjoint}. However, unlike Example \ref{ccclass_adjoint}, the condition is sufficient but not necessary.
\end{proof}

\section{From modules over a Hopf brace to modules over an algebra}\label{sec_equivalence}
The weakest notion for the category of modules over a Hopf brace that can be found in the literature is the one introduced by González Rodríguez in \cite{RGON} in order to prove the Fundamental Theorem of Hopf modules in the Hopf brace setting.
\begin{definition}\label{RGONdef}
Let $\mathbb{H}=(H_{1},H_{2})$ be a Hopf brace in {\sf C}. A left module over $\mathbb{H}$ is a triple $(M,\varphi_{M}^{1},\varphi_{M}^{2})$ where $(M,\varphi_{M}^{1})$ is a left $H_{1}$-module, $(M,\varphi_{M}^{2})$ is a left $H_{2}$-module and the compatibility condition 
\begin{gather}\label{compatmodH}
\varphi_{M}^{2}\circ(H\otimes\varphi_{M}^{1})=\varphi_{M}^{1}\circ(\mu_{H}^{2}\otimes\Gamma_{M})\circ(H\otimes c_{H,H}\otimes M)\circ(\delta_{H}\otimes H\otimes M)
\end{gather}
holds, where $\Gamma_{M}$ is defined by 
\begin{gather}\label{GammaMdef}
\Gamma_{M}\coloneqq\varphi_{M}^{1}\circ(\lambda_{H}^{1}\otimes\varphi_{M}^{2})\circ(\delta_{H}\otimes M).
\end{gather}
	
	Given left $\mathbb{H}$-modules  $(M,\varphi_{M}^1, \varphi_{M}^{2})$  and  $(N,\varphi_{N}^1, \varphi_{N}^{2})$, a morphism $$f\colon (M,\varphi_{M}^1, \varphi_{M}^{2})\rightarrow (N,\varphi_{N}^1, \varphi_{N}^{2})$$  is said to be a morphism of left $\mathbb{H}$-modules if  $f\colon (M,\varphi_{M}^{1})\rightarrow(N,\varphi_{N}^{1})$ is $H_{1}$-linear and $f\colon (M,\varphi_{M}^{2})\rightarrow(N,\varphi_{N}^{2})$ is $H_{2}$-linear. Left ${\mathbb H}$-modules constitute a category which we will denote by ${}_{\mathbb{H}}{\sf Mod}$. 
\end{definition}
\begin{remark}
	Note that, by the naturality of $c$, the coassociativity of $\delta_{H}$ and \eqref{actionprod} for $(M,\varphi_{M}^{1})$, we obtain that
	\begin{align*}
		&\varphi_{M}^{1}\circ(\mu_{H}^{2}\otimes\Gamma_{M})\circ(H\otimes c_{H,H}\otimes M)\circ(\delta_{H}\otimes H\otimes M)\\=\,&\varphi_{M}^{1}\circ(\Gamma'_{H_{1}}\otimes\varphi_{M}^{2})\circ(H\otimes c_{H,H}\otimes M)\circ(\delta_{H}\otimes H\otimes M),
	\end{align*}
	where $\Gamma'_{H_{1}}$ is defined as in \eqref{GammaprimadefH1}. As a result, \eqref{compatmodH} is equivalent to
	\begin{gather}\label{compatmodHprime}
		\varphi_{M}^{2}\circ(H\otimes\varphi_{M}^{1})=\varphi_{M}^{1}\circ(\Gamma'_{H_{1}}\otimes\varphi_{M}^{2})\circ(H\otimes c_{H,H}\otimes M)\circ(\delta_{H}\otimes H\otimes M).
	\end{gather}
\end{remark}
\begin{example}\label{ex_HmodHbrace}
	Given $\mathbb{H}=(H_{1},H_{2})$ a Hopf brace in ${\sf C}$, the triple $(H,\varphi_{H}^{1}=\mu_{H}^{1},\varphi_{H}^{2}=\mu_{H}^{2})$ is an example of left $\mathbb{H}$-module. For this situation, $\Gamma_{H}$ defined in \eqref{GammaMdef} coincides with the action $\Gamma_{H_{1}}$ defined in \eqref{def_GammaH1}, and then \eqref{compatmodH} follows by \eqref{compatHbrace}.
\end{example}

\begin{example}
	Let $H=(H,\eta_{H},\mu_{H},\varepsilon_{H},\delta_{H},\lambda_{H})$ be a Hopf algebra in ${\sf C}$ and consider $\mathbb{H}_{\textnormal{triv}}=(H,H)$ the trivial Hopf brace constructed from $H$. If $(M,\varphi_{M})$ is a left $H$-module, then $(M,\varphi_{M},\varphi_{M})$ is a left $\mathbb{H}_{\textnormal{triv}}$-module. In this situation, $\Gamma_{M}=\varepsilon_{H}\otimes M$.
\end{example}

Modules over Hopf braces rely on two structures of left $H_2$-modules. On the one hand, given a left $\mathbb H$-module $(M, \varphi_M^1, \varphi_M^2)$ we have that $(M, \varphi_M^2)$ is a left $H_2$-module. On the other hand, as can be consulted in \cite[Lemma 2.11]{VGRRProj}, $(M, \Gamma_M)$ is a left $H_2$-module, where $\Gamma_M$ is defined as in \eqref{GammaMdef}. Moreover, following the proof of \eqref{mu2-exp}, we obtain an expression for $\varphi_M^2$ in terms of $\Gamma_M$ and $\varphi_M^1$:
\begin{gather}\label{varphi2expression}
	\varphi_{M}^{2}=\varphi_{M}^{1}\circ(H\otimes\Gamma_{M})\circ(\delta_{H}\otimes M).
\end{gather}

These $H_2$-actions together with equality \eqref{varphi2expression} encode the conditions for $(M, \varphi_M^1, \varphi_M^2)$ to be a left $\mathbb H$-module. To obtain such conditions,  let us define the category of left ${\mathbb H}$-actions, denoted by  $_{\mathbb H}\sf{Ac}$ and introduced in \cite{Ramos}. The objects in $_{\mathbb H}\sf{Ac}$ are triples  $(M, \varphi_M, \phi_M)$ such that  $(M, \phi_M)$ is a left $H_2$-module and $(M, \varphi_M)$ is a left $H_1$-module whose action $$\varphi_M:(H\otimes M, \phi_{H\otimes M})\to (M, \phi_M)$$ is a morphism of left $H_2$-modules, where $\phi_{H\otimes M} \coloneqq (\Gamma_{H_1}\otimes \phi_M)\circ (H\otimes c_{H,H}\otimes M)\circ (\delta_H\otimes M)$, i.e., 
\begin{equation}
\label{linearphi}
\phi_{M}\circ (H\otimes \varphi_{M})=\varphi_{M}\circ (\Gamma_{H_{1}}\otimes \phi_{M})\circ (H\otimes c_{H,H}\otimes M)\circ (\delta_{H}\otimes H\otimes M)
\end{equation}

Morphisms in  $_{\mathbb H}\sf{Ac}$ are morphisms of left $H_1$-modules and of left $H_2$-modules. This category is isomorphic to $_{\mathbb H}\sf{Mod}$. Indeed, define the functor 
$${\sf U}:{}_{\mathbb H}\sf{Ac}\longrightarrow {}_{\mathbb H}\sf{Mod}$$
 in the following way:
$${\sf U}(M, \varphi_M, \phi_M) = (M, \varphi_{M}^1 = \varphi_M, \varphi_M^2 = \varphi_M\circ (H\otimes \phi_M)\circ (\delta_H\otimes M))$$ on objects and as the identity on morphisms.

 First of all we will see that $(M, \varphi_M^2)$ is a left $H_2$-module. Indeed:
\begin{itemize}
	\itemindent=-32pt 
	\item[ ]$\hspace{0.38cm} \varphi_M^2\circ (H\otimes \varphi_M^2)$ 
	\item[] $= \varphi_M\circ (H\otimes \phi_M)\circ (\delta_H\otimes (\varphi_M\circ (H\otimes \phi_M)\circ (\delta_H\otimes M)))$ {\footnotesize (by definition of $\varphi_M^2$)}
	\item[] $=\varphi_{M}\circ (H\otimes(\varphi_{M}\circ (\Gamma_{H_{1}}\otimes\phi_{M})\circ(H\otimes c_{H,H}\otimes M)\circ(\delta_{H}\otimes H\otimes M)))\circ (\delta_{H}\otimes ((H\otimes\phi_{M})\circ(\delta_{H}\otimes M)))$
	\item[]$\hspace{0.38cm}${\footnotesize (by \eqref{linearphi})}
	\item[]$=\varphi_{M}\circ((\mu_{H}^{1}\circ(H\otimes\Gamma_{H_{1}})\circ(\delta_{H}\otimes H))\otimes(\phi_{M}\circ(\mu_{H}^{2}\otimes M)))\circ(((H\otimes c_{H,H}\otimes H)\circ(\delta_{H}\otimes\delta_{H}))\otimes M)$ 
	\item[]$\hspace{0.38cm}${\footnotesize (by coassociativity of $\delta_{H}$, the condition of left $H_{1}$-module for $(M,\varphi_{M})$ and the condition of left $H_{2}$-module}
	\item[]$\hspace{0.38cm}${\footnotesize for $(M,\phi_{M})$)}
	\item[]$=\varphi_{M}\circ(H\otimes\phi_{M})\circ (((\mu_{H}^{2}\otimes\mu_{H}^{2})\circ(H\otimes c_{H,H}\otimes H)\circ (\delta_{H}\otimes\delta_{H}))\otimes M)$ {\footnotesize (by \eqref{mu2-exp})}
	\item[] $ = \varphi_M\circ (H\otimes \phi_M)\circ (\delta_H\otimes M)\circ (\mu_{H}^{2}\otimes M)$ {\footnotesize (by the coalgebra morphism condition for $\mu_{H}^{2}$)}
	\item[] $ = \varphi_M^2\circ (\mu_{H}^{2}\otimes M)$ {\footnotesize (by the definition of $\varphi_M^2$)}.
\end{itemize}

Since $\varphi_M^2\circ (\eta_H\otimes M) ={\rm id}_{M}$ by straightforward calculations, we have that $(M, \varphi_M^2)$ is a left $H_2$-module. 

Now, defining $\Gamma_M$ as in \eqref{GammaMdef}, we obtain that:
\begin{itemize}
	\itemindent=-32pt
	\item[] $\hspace{0.38cm}\Gamma_M$
	\item[] $ = \varphi_M^1 \circ (\lambda_{H}^1\otimes \varphi_M^2)\circ (\delta_{H}\otimes M)$
	\item[] $ = \varphi_M\circ (\lambda_{H}^1\otimes (\varphi_M\circ (H\otimes \phi_{M})\circ (\delta_{H}\otimes M)))
	\circ (\delta_H\otimes M)$ {\footnotesize (by definition of $\varphi_M^2$)}
	\item[] $ = \phi_M$ {\footnotesize (by coassociativity of $\delta_{H}$,  the condition of left $H_1$-module for $(M, \varphi_M)$ and (\ref{antipode})).}
\end{itemize}

Thus $\Gamma_M = \phi_M$, and from this equality and the $H_2$-linearity of $\varphi_M$ follows condition \eqref{compatmodH}. Indeed:
\begin{itemize}
	\itemindent=-32pt
	\item[] $\hspace{0.38cm} \varphi_M^2\circ (H\ot \varphi_M^1)$
	\item[] $ = \varphi_M\circ (H\ot \phi_M)\circ (\delta_H\ot \varphi_M)$ {\footnotesize (by the definition of $\varphi_M$ and equality $\Gamma_M = \phi_M$)}
	\item[] $ = \varphi_M\circ (H\ot (\varphi_M\circ (\Gamma_{H_1}\ot \Gamma_M)\circ (H\ot c_{H,H}\ot M)\circ (\delta_H\ot H\ot M)))$ {\footnotesize (by (\ref{linearphi}))}
	\item[] $ = \varphi_M^1\circ (\mu_{H}^2\ot \Gamma_M)\circ (H\ot c_{H,H}\ot M)\circ (\delta_H\ot H\ot M)$ {\footnotesize (by coassociativity of $\delta_{H}$, \eqref{mu2-exp} and the condition}
	\item[] $\hspace{0.38cm}$ {\footnotesize   of $H_1$-module for $(M, \varphi_M)$)}.
\end{itemize}

As a consequence, $(M, \varphi_M^1, \varphi_M^2)$ is a left ${\mathbb H}$-module. Finally observe that, if $$f:(M, \varphi_M, \phi_M)\to (N, \varphi_N, \phi_N)$$ is a morphism of left $H_1$-modules and  left $H_2$-modules, then $f:(M, \varphi_M^2)\to (N, \varphi_N^2)$ is also a left $H_2$-module morphism. 

Now consider $${\sf V}:{}_{\mathbb H}\sf{Mod}\longrightarrow{} _{\mathbb H}\sf{Ac}$$ the functor defined on objects as ${\sf V}(M, \varphi_M^1, \varphi_M^2) = (M, \varphi_M=\varphi_M^1, \phi_M = \Gamma_M)$. As we have already mentioned, we know that $(M,\Gamma_{M})$ is a left $H_{2}$-module. Moreover, also from \cite[Lemma 2.11]{VGRRProj}, the equality
\begin{equation}
	\label{GMH1}
	\Gamma_{M}\circ (H\otimes \varphi_{M}^1)=\varphi_{M}^1\circ (\Gamma_{H_1}\otimes \Gamma_{M})\circ (H\otimes c_{H,H}\otimes M)\circ (\delta_{H}\otimes H\otimes M)
\end{equation}
holds. Therefore, $(M, \varphi_M^1, \Gamma_M)$ is an object in  ${}_{\mathbb H}\sf{Ac}$. On morphisms ${\sf V}$ is defined by the identity, because if $f: (M, \varphi_M^1, \varphi_M^2)\to (N, \varphi_N^1, \varphi_M^2)$ is a morphism in ${}_{\mathbb H}{\sf Mod}$, then it is clear that $f:(M, \Gamma_M)\to (N, \Gamma_N)$ is a morphism of left $H_2$-modules. Finally, by straightforward calculations we have that these two functors are inverse. As a consequence, given a left $H_1$-module and left $H_2$-module $(M, \varphi_M^1, \varphi_M^2)$, and defining $\Gamma_M$ as in \eqref{GammaMdef}, we have that equality \eqref{GMH1} characterizes left $\mathbb H$-modules. On the other hand, given a triple $(M, \varphi_M, \Gamma_M)$  satisfying equality \eqref{GMH1}, equality \eqref{varphi2expression} characterizes the module structure over $H_2$ for a left $\mathbb H$-module (and it is, so to say, a reciprocal equality to \eqref{GammaMdef}). 

\begin{example}\label{ex-mod-Rump}
	Let ${\mathbb H} = (H_1, H_2)$ be a Hopf brace and let $(M,\phi_{M})$ be a left $H_{2}$-module. The triple $(M,\varphi_{M}^{1}=\varepsilon_{H}\otimes M, \phi_{M})$ is an object in ${}_{\mathbb H}\sf{Ac}$ and thus it can be regarded as a left $\mathbb{H}$-module. As a consequence, $_{H_2}\sf{Mod}$ is a subcategory of $_{\mathbb H}\sf{Mod}$. 
	
	Now let $(G, \cdot, \circ)$ be a  brace in the sense of Rump  (this is, a skew brace whose first group structure is abelian). In \cite{Rump} the category of modules over a brace is defined, and it is proved to be equivalent to the category of left modules over $(G, \circ)$. Let $\mathbb K$ be an arbitrary field, and consider the corresponding Hopf brace ${\mathbb K[{\mathbb G}]} = ({\mathbb K}[G_{\cdot}], {\mathbb K}[G_{\circ}])$ defined in Example \ref{galg}. In this case modules in the sense of Rump over ${\mathbb K[{\mathbb G}]}$ are $_{{\mathbb K}[G_{\circ}]}\sf{Mod}$ that, as we have mentioned, it is a subcategory of $_{\mathbb K[{\mathbb G}]}{\sf Mod}$.
\end{example}

Moreover, as  it is discussed in \cite[Theorem 2.12]{VGRRProj}, ${}_{\mathbb{H}}{\sf Mod}$ is a symmetric monoidal category when $\mathbb{H}$ is a cocommutative Hopf brace. Nevertheless, the assumption of cocommutativity on $\mathbb{H}$ can be relaxed by once more appealing to the cocommutativity class, while still obtaining that the category is monoidal.
\begin{definition}
	Let $\mathbb{H}$ be a Hopf brace in ${\sf C}$. We will denote by ${}_{\mathbb{H}}{\sf Mod}^{{\sf cc}}$ the full subcategory of ${}_{\mathbb{H}}{\sf Mod}$ whose objects satisfy that $(M,\Gamma_{M})$ belongs to the cocommutativity class of $H_{2}$, that is, for the objects in this subcategory we have that
	\begin{gather}\label{ccclassGammaM}
		(\Gamma_{M}\otimes H)\circ(H\otimes c_{H,M})\circ(\delta_{H}\otimes M)=(\Gamma_{M}\otimes H)\circ(H\otimes c_{H,M})\circ((c_{H,H}\circ\delta_{H})\otimes M).
	\end{gather}
	
	Then, the superscript ${\sf cc}$ in the notation of this subcategory is chosen in reference to the term ``cocommutativity class''. 
	
	If $\mathbb{H}$ is assumed to be cocommutative, then \eqref{ccclassGammaM} holds and, consequently,  ${}_{\mathbb{H}}{\sf Mod}^{{\sf cc}}={}_{\mathbb{H}}{\sf Mod}$.
\end{definition}
\begin{theorem}
	Let $\mathbb{H}=(H_{1},H_{2})$ be a Hopf brace in ${\sf C}$. The category ${}_{\mathbb{H}}{\sf Mod}^{{\sf cc}}$ is monoidal with unit object $(K,\varepsilon_{H}\otimes K,\varepsilon_{H}\otimes K)$ and tensor functor defined as follows:
	\begin{align*}
		\otimes\colon {}_{\mathbb{H}}{\sf Mod}^{{\sf cc}}\times {}_{\mathbb{H}}{\sf Mod}^{{\sf cc}}&\longrightarrow {}_{\mathbb{H}}{\sf Mod}^{{\sf cc}}\\((M,\varphi_{M}^{1},\varphi_{M}^{2}),(N,\varphi_{N}^{1},\varphi_{N}^{2}))&\longmapsto(M\otimes N,\varphi_{M\otimes N}^{1},\varphi_{M\otimes N}^{2}),
	\end{align*}
	where
	\begin{gather}\label{usualtensoraction}
		\varphi_{M\otimes N}^{i}=(\varphi_{M}^{i}\otimes\varphi_{N}^{i})\circ(H\otimes c_{H,M}\otimes N)\circ(\delta_{H}\otimes M\otimes N),\textnormal{ for all }i=1,2.
	\end{gather}
\end{theorem}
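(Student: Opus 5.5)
The plan is to work in the isomorphic category ${}_{\mathbb H}{\sf Ac}$ through the functors ${\sf U}$ and ${\sf V}$. There a left $\mathbb H$-module is the same thing as a triple $(M,\varphi_M^1,\Gamma_M)$ satisfying \eqref{GMH1}, with $\varphi_M^2$ recovered from \eqref{varphi2expression}.

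For $M,N$ in ${}_{\mathbb H}{\sf Mod}^{\sf cc}$ the strategy is as follows.
\begin{enumerate}
\item Check that $\varphi^1_{M\otimes N}$ and $\varphi^2_{M\otimes N}$ from \eqref{usualtensoraction} are left $H_1$- and $H_2$-module actions. This is standard, since $\delta_H$ is an algebra morphism for both $\mu_H^1$ and $\mu_H^2$ and ${\sf C}$ is symmetric.
\item Compute $\Gamma_{M\otimes N}$ from \eqref{GammaMdef} and show that it equals the diagonal action $\Gamma_{M\otimes N}^{\rm diag}\coloneqq(\Gamma_M\otimes\Gamma_N)\circ(H\otimes c_{H,M}\otimes N)\circ(\delta_H\otimes M\otimes N)$.
\item Conclude from the diagonal form that $(M\otimes N,\Gamma_{M\otimes N})$ is an $H_2$-module in the cocommutativity class.
\item Verify \eqref{GMH1} for $M\otimes N$.
\end{enumerate}

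For step 2, expand
\[
\Gamma_{M\otimes N}=\varphi^1_{M\otimes N}\circ(\lambda^1_H\otimes\varphi^2_{M\otimes N})\circ(\delta_H\otimes M\otimes N).
\]
Substitute \eqref{varphi2expression} for $\varphi^2_M$ and $\varphi^2_N$, and use \eqref{a-antip2} for $\lambda^1_H$. This produces, in Sweedler-like terms, $\lambda^1(h_2)h_3\,\Gamma_M(h_4)$ on $M$ and $\lambda^1(h_1)h_5\,\Gamma_N(h_6)$ on $N$. The $M$-factor collapses by the antipode identity \eqref{antipode}. The $N$-factor does not collapse directly, because the legs are interleaved.

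This is where \eqref{ccclassGammaM} for $M$ enters. Using \eqref{ccclass2} for $\Gamma_M$, I would swap the tensor leg feeding $\Gamma_M$ with the adjacent leg. That makes the legs acting on $N$ consecutive, so the $N$-factor also collapses via \eqref{antipode}. The result is $\Gamma_{M\otimes N}=\Gamma^{\rm diag}_{M\otimes N}$. I expect this leg-reordering bookkeeping, done with naturality of $c$, to be the main obstacle, and possibly the cocommutativity class of $N$ is needed as well.

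Step 3 then follows directly:
\begin{itemize}
\item $\Gamma^{\rm diag}_{M\otimes N}$ is an $H_2$-module action because $\Gamma_M$ and $\Gamma_N$ are $H_2$-actions (by \cite[Lemma 2.11]{VGRRProj}) and $\delta_H$ is multiplicative for $\mu_H^2$.
\item It lies in the cocommutativity class because, by coassociativity, swapping the outer legs of $\delta_H$ reduces to applying \eqref{ccclass2} separately to $M$ and $N$.
\end{itemize}

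For step 4, I would rewrite the left side of \eqref{GMH1} for $M\otimes N$ as a diagonal combination of $\Gamma_M\circ(H\otimes\varphi^1_M)$ and $\Gamma_N\circ(H\otimes\varphi^1_N)$ and apply \eqref{GMH1} to each. Regrouping then requires $\Gamma_{H_1}$ to be a coalgebra morphism. That is not available in general, so instead I would rearrange with the cocommutativity class of $M$ and $N$ to reorder the coproduct legs, as in the proof that $\Gamma_{H_1}$ is comultiplicative under \eqref{ccclass}. Alternatively one can verify \eqref{compatmodH} directly: since $\varphi^2_{M\otimes N}$ is diagonal, the compatibility reduces componentwise to that of $M$ and $N$ after one use of \eqref{ccclassGammaM} to align legs. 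I would try this direct route first.

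The remaining items are routine:
\begin{itemize}
\item Tensor products of morphisms are morphisms for both actions.
\item The unit $(K,\varepsilon_H\otimes K,\varepsilon_H\otimes K)$ has $\Gamma_K=\varepsilon_H\otimes K$, which is trivially in the class.
\item The associativity and unit constraints are identities, since ${\sf C}$ is strict and the diagonal actions are associative by coassociativity.
\end{itemize}
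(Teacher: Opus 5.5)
Your proposal is correct. Once you take the direct route in step 4 that you say you would try first, it is essentially the paper's proof. The paper shows $\Gamma_{M\otimes N}$ is the diagonal action using \eqref{a-antip2} for $H_1$ and the cocommutativity class of $M$ only; the class of $N$ is not needed there. It then checks \eqref{compatmodH} for $M\otimes N$ with one further use of \eqref{ccclassGammaM} for $M$, followed by \eqref{compatmodH} for $M$ and $N$. Finally, it derives \eqref{ccclassGammaM} for $M\otimes N$ from the diagonal form, exactly as in your step 3.

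This makes the detour through ${}_{\mathbb H}{\sf Ac}$ and \eqref{GMH1} unnecessary, and with it the separate check that the diagonal $\Gamma$ is an $H_2$-action.
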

\begin{proof}
	It is a classical result in the theory of Hopf algebras that the category of modules over a Hopf algebra $H$ is monoidal, where, if $(M,\varphi_{M})$ and $(N,\varphi_{N})$ are modules over $H$, then the tensor functor is given by $(M\otimes N,\varphi_{M\otimes N})$ with
	\begin{gather*}\label{tensorusualactionhopf}
		\varphi_{M\otimes N}\coloneqq(\varphi_{M}\otimes\varphi_{N})\circ(H\otimes c_{H,M}\otimes N)\circ(\delta_{H}\otimes M\otimes N).\end{gather*}
	Therefore, $(M\otimes N, \varphi_{M\otimes N}^1)$ is a left $H_{1}$-module and $(M\otimes N, \varphi_{M\otimes N}^2)$ is a left $H_{2}$-module.
	
	Let us show that $(M\otimes N,\varphi_{M\otimes N}^{1},\varphi_{M\otimes N}^{2})$ satisfies \eqref{compatmodH}. To do so, note at first that
	\begin{gather}\label{Gammamodtensor}
		\Gamma_{M\otimes N}=(\Gamma_{M}\otimes \Gamma_{N})\circ(H\otimes c_{H,M}\otimes N)\circ(\delta_{H}\otimes M\otimes N).
	\end{gather}
	Indeed,
	\begin{itemize}
		\itemindent=-32pt 
		\item[ ]$\hspace{0.38cm}\Gamma_{M\otimes N}$
		\item[]$=(\varphi_{M}^{1}\otimes\varphi_{N}^{1})\circ(H\otimes c_{H,M}\otimes N)\circ((\delta_{H}\circ\lambda_{H}^{1})\otimes((\varphi_{M}^{2}\otimes\varphi_{N}^{2})\circ(H\otimes c_{H,M}\otimes N)$
		\item[]$\hspace{0.38cm}\circ (\delta_{H}\otimes M\otimes N)))\circ (\delta_{H}\otimes M\otimes N)$ {\footnotesize (by \eqref{GammaMdef} for $(M\otimes N,\varphi_{M\otimes N}^{1},\varphi_{M\otimes N}^{2})$)}
		\item[]$=(\varphi_{M}^{1}\otimes\varphi_{N}^{1})\circ(H\otimes c_{H,M}\otimes N)\circ(((\lambda_{H}^{1}\otimes\lambda_{H}^{1})\circ c_{H,H}\circ\delta_{H})\otimes((\varphi_{M}^{2}\otimes\varphi_{N}^{2})\circ(H\otimes c_{H,M}\otimes N)$
		\item[]$\hspace{0.38cm}\circ (\delta_{H}\otimes M\otimes N)))\circ (\delta_{H}\otimes M\otimes N)$ {\footnotesize (by \eqref{a-antip2} for $H_{1}$)}
		\item[]$=((\varphi_{M}^{1}\circ(\lambda_{H}^{1}\otimes\varphi_{M}^{2}))\otimes(\varphi_{N}^{1}\circ(\lambda_{H}^{1}\otimes\varphi_{N}^{2})))\circ(((H\otimes H\otimes c_{H,M})\circ (H\otimes c_{H,H}\otimes M)$
		\item[]$\hspace{0.38cm}\circ(c_{H,H}\otimes H\otimes M)\circ(H\otimes\delta_{H}\otimes M))\otimes H\otimes N)\circ (((\delta_{H}\otimes c_{H,M})\circ (\delta_{H}\otimes M))\otimes N)$ {\footnotesize (by coassociativity}
		\item[]$\hspace{0.38cm}${\footnotesize of $\delta_{H}$ and the naturality of $c$)}
		\item[]$=(M\otimes (\varphi_{N}^{1}\circ(\lambda_{H}^{1}\otimes \varphi_{N}^{2})))\circ (((\Gamma_{M}\otimes H)\circ(H\otimes c_{H,M})\circ ((c_{H,H}\circ\delta_{H})\otimes M))\otimes H\otimes N)$
		\item[]$\hspace{0.38cm}\circ(H\otimes c_{H,M}\otimes N)\circ(\delta_{H}\otimes M\otimes N)$ {\footnotesize (by naturality of $c$ and \eqref{GammaMdef} for $(M,\varphi_{M}^{1},\varphi_{M}^{2})$)}
		\item[]$=(M\otimes (\varphi_{N}^{1}\circ(\lambda_{H}^{1}\otimes \varphi_{N}^{2})))\circ (((\Gamma_{M}\otimes H)\circ(H\otimes c_{H,M})\circ (\delta_{H}\otimes M))\otimes H\otimes N)$
		\item[]$\hspace{0.38cm}\circ(H\otimes c_{H,M}\otimes N)\circ(\delta_{H}\otimes M\otimes N)$ {\footnotesize (by \eqref{ccclassGammaM})}
		\item[] $=(\Gamma_{M}\otimes\Gamma_{N})\circ(H\otimes c_{H,M}\otimes N)\circ(\delta_{H}\otimes M\otimes N)$ {\footnotesize (by coassociativity of $\delta_{H}$, the naturality of $c$ and}
		\item[]$\hspace{0.38cm}${\footnotesize \eqref{GammaMdef} for $(N,\varphi_{N}^{1},\varphi_{N}^{2})$).}
	\end{itemize}
	
	Then, \eqref{compatmodH} for $(M\otimes N,\varphi_{M\otimes N}^{1},\varphi_{M\otimes N}^{2})$ follows by
	\begin{itemize}
		\itemindent=-32pt 
		\item[ ]$\hspace{0.38cm}\varphi_{M\otimes N}^{1}\circ(\mu_{H}^{2}\otimes \Gamma_{M\otimes N})\circ(H\otimes c_{H,H}\otimes M\otimes N)\circ(\delta_{H}\otimes H\otimes M\otimes N)$
		\item[]$=(\varphi_{M}^{1}\otimes\varphi_{N}^{1})\circ(H\otimes c_{H,M}\otimes N)\circ((\delta_{H}\circ\mu_{H}^{2})\otimes ((\Gamma_{M}\otimes\Gamma_{N})\circ(H\otimes c_{H,M}\otimes N)$
		\item[]$\hspace{0.38cm}\circ(\delta_{H}\otimes M\otimes N)))\circ(H\otimes c_{H,H}\otimes M\otimes N)\circ(\delta_{H}\otimes H\otimes M\otimes N)$ {\footnotesize (by \eqref{usualtensoraction} and \eqref{Gammamodtensor})}
		\item[]$=((\varphi_{M}^{1}\circ (\mu_{H}^{2}\otimes \Gamma_{M}))\otimes(\varphi_{N}^{1}\circ(\mu_{H}^{2}\otimes\Gamma_{N})))\circ (H\otimes((H\otimes H\otimes c_{H,M}\otimes H\otimes H)\circ(((H\otimes c_{H,H})$
		\item[]$\hspace{0.38cm}\circ(c_{H,H}\otimes H)\circ(H\otimes c_{H,H})\circ (\delta_{H}\otimes H))\otimes((c_{H,M}\otimes H)\circ (H\otimes c_{H,M})\circ (c_{H,H}\otimes M)))$
		\item[]$\hspace{0.38cm}\circ (H\otimes H\otimes c_{H,H}\otimes H\otimes M))\otimes N)\circ(((\delta_{H}\otimes H)\circ\delta_{H})\otimes\delta_{H}\otimes M\otimes N)$ {\footnotesize (by naturality of $c$, the}
		\item[]$\hspace{0.38cm}${\footnotesize symmetric character of ${\sf C}$ and coassociativity of $\delta_{H}$)}
		\item[]$=((\varphi_{M}^{1}\circ (\mu_{H}^{2}\otimes M))\otimes(\varphi_{N}^{1}\circ(\mu_{H}^{2}\otimes\Gamma_{N})))\circ(H\otimes((H\otimes ((\Gamma_{M}\otimes H)\circ(H\otimes c_{H,M})$
		\item[]$\hspace{0.38cm}\circ((c_{H,H}\circ\delta_{H})\otimes M))\otimes H\otimes H)\circ (c_{H,H}\otimes ((c_{H,M}\otimes H)\circ (H\otimes c_{H,M})\circ (c_{H,H}\otimes M)))$
		\item[]$\hspace{0.38cm}\circ(H\otimes c_{H,H}\otimes H\otimes M))\otimes N)\circ(((\delta_{H}\otimes H)\circ\delta_{H})\otimes\delta_{H}\otimes M\otimes N)$ {\footnotesize (by naturality of $c$)}
		\item[]$=((\varphi_{M}^{1}\circ (\mu_{H}^{2}\otimes M))\otimes(\varphi_{N}^{1}\circ(\mu_{H}^{2}\otimes\Gamma_{N})))\circ(H\otimes((H\otimes ((\Gamma_{M}\otimes H)\circ(H\otimes c_{H,M})$
		\item[]$\hspace{0.38cm}\circ(\delta_{H}\otimes M))\otimes H\otimes H)\circ (c_{H,H}\otimes ((c_{H,M}\otimes H)\circ (H\otimes c_{H,M})\circ (c_{H,H}\otimes M)))$
		\item[]$\hspace{0.38cm}\circ(H\otimes c_{H,H}\otimes H\otimes M))\otimes N)\circ(((\delta_{H}\otimes H)\circ\delta_{H})\otimes\delta_{H}\otimes M\otimes N)$ {\footnotesize (by \eqref{ccclassGammaM})}
		\item[]$=((\varphi_{M}^{1}\circ(\mu_{H}^{2}\otimes\Gamma_{M})\circ(H\otimes c_{H,H}\otimes M)\circ(\delta_{H}\otimes H\otimes M))\otimes(\varphi_{N}^{1}\circ (\mu_{H}^{2}\otimes\Gamma_{N})$
		\item[]$\hspace{0.38cm}\circ (H\otimes c_{H,H}\otimes N)))\circ (H\otimes((H\otimes c_{H,M}\otimes H)\circ (c_{H,H}\otimes c_{H,M})\circ (H\otimes c_{H,H}\otimes M)$
		\item[]$\hspace{0.38cm}\circ(\delta_{H}\otimes H\otimes M))\otimes H\otimes N)\circ (\delta_{H}\otimes((H\otimes c_{H,M})\circ(\delta_{H}\otimes M))\otimes N)$ {\footnotesize (by naturality of $c$ and}
		\item[]$\hspace{0.38cm}${\footnotesize coassociativity of $\delta_{H}$)}
		\item[]$=((\varphi_{M}^{2}\circ(H\otimes\varphi_{M}^{1}))\otimes(\varphi_{N}^{1}\circ(\mu_{H}^{2}\otimes \Gamma_{N})\circ(H\otimes c_{H,H}\otimes N)\circ(\delta_{H}\otimes H\otimes N)))$
		\item[]$\hspace{0.38cm}\circ (H\otimes((H\otimes c_{H,M}\otimes H)\circ(c_{H,H}\otimes c_{H,M})) \otimes N)\circ(\delta_{H}\otimes\delta_{H}\otimes M\otimes N)$ {\footnotesize (by naturality of $c$ and}
		\item[]$\hspace{0.38cm}${\footnotesize \eqref{compatmodH} for $(M,\varphi_{M}^{1},\varphi_{M}^{2})$)}
		\item[]$=(\varphi_{M}^{2}\otimes (\varphi_{N}^{2}\circ(H\otimes \varphi_{N}^{1})))\circ (H\otimes((\varphi_{M}^{1}\otimes H)\circ (H\otimes c_{H,M})\circ (c_{H,H}\otimes M))\otimes H\otimes N)$
		\item[]$\hspace{0.38cm}\circ(\delta_{H}\otimes((H\otimes c_{H,M})\circ(\delta_{H}\otimes M))\otimes N)$ {\footnotesize (by \eqref{compatmodH} for $(N,\varphi_{N}^{1},\varphi_{N}^{2})$)} 
		\item[]$=\varphi_{M\otimes N}^{2}\circ(H\otimes\varphi_{M\otimes N}^{1})$ {\footnotesize (by naturality of $c$ and \eqref{usualtensoraction})}.
	\end{itemize}
	
	To conclude, it only remains to show that \eqref{ccclassGammaM} holds for $(M\otimes N,\varphi_{M\otimes N}^{1},\varphi_{M\otimes N}^{2})$.
	Indeed,
	\begin{itemize}
		\itemindent=-32pt 
		\item[ ]$\hspace{0.38cm}\Gamma_{M\otimes N}\circ(H\otimes c_{H,M\otimes N})\circ((c_{H,H}\circ\delta_{H})\otimes M\otimes N)$
		\item[]$=(((\Gamma_{M}\otimes\Gamma_{N})\circ (H\otimes c_{H,M}\otimes N)\circ (\delta_{H}\otimes M\otimes N))\otimes H)\circ (H\otimes((M\otimes c_{H,N})\circ (c_{H,M}\otimes N)))$
		\item[]$\hspace{0.38cm}\circ((c_{H,H}\circ\delta_{H})\otimes M\otimes N)$ {\footnotesize (by \eqref{Gammamodtensor})}
		\item[]$=(M\otimes ((\Gamma_{N}\otimes H)\circ(H\otimes c_{H,N})\circ (c_{H,H}\otimes N)))\circ(((\Gamma_{M}\otimes H)\circ(H\otimes c_{H,M})\circ((c_{H,H}\circ\delta_{H})$
		\item[]$\hspace{0.38cm}\otimes M))\otimes H\otimes N)\circ (((H\otimes c_{H,M})\circ(\delta_{H}\otimes M))\otimes N)$ {\footnotesize (by naturality of $c$ and coassociativity of $\delta_{H}$)}
		\item[]$=(M\otimes ((\Gamma_{N}\otimes H)\circ(H\otimes c_{H,N})\circ (c_{H,H}\otimes N)))\circ(((\Gamma_{M}\otimes H)\circ(H\otimes c_{H,M})\circ(\delta_{H}\otimes M))$
		\item[]$\hspace{0.38cm}\otimes H\otimes N)\circ (((H\otimes c_{H,M})\circ(\delta_{H}\otimes M))\otimes N)$ {\footnotesize (by \eqref{ccclassGammaM} for $(M,\Gamma_{M})$)}
		\item[]$=(\Gamma_{M}\otimes((\Gamma_{N}\otimes H)\circ (H\otimes c_{H,N})\circ ((c_{H,H}\circ\delta_{H})\otimes N)))\circ(H\otimes c_{H,M}\otimes N)\circ(\delta_{H}\otimes M\otimes N)$ 
		\item[]$\hspace{0.38cm}${\footnotesize (by coassociativity of $\delta_{H}$ and naturality of $c$)}
		\item[]$=(\Gamma_{M}\otimes((\Gamma_{N}\otimes H)\circ (H\otimes c_{H,N})\circ (\delta_{H}\otimes N)))\circ(H\otimes c_{H,M}\otimes N)\circ(\delta_{H}\otimes M\otimes N)$ {\footnotesize (by \eqref{ccclassGammaM}}
		\item[]$\hspace{0.38cm}${\footnotesize for $(N,\Gamma_{N})$)}
		\item[]$=\Gamma_{M\otimes N}\circ(H\otimes c_{H,M\otimes N})\circ(\delta_{H}\otimes M\otimes N)$ {\footnotesize (by naturality of $c$ and coassociativity of $\delta_{H}$)}.\qedhere
	\end{itemize}
\end{proof}

Having introduced the category of modules over a Hopf brace, we now address the question of whether modules over a Hopf brace can be regarded as modules over an algebra, and we provide an affirmative answer. The following theorem is the key tool to obtain the link between modules over a Hopf brace and modules over an algebra.

\begin{theorem}\label{equivalenceth}
	Let $\mathbb{H}=(H_{1},H_{2})$ be a Hopf brace in ${\sf C}$. If $(M,\varphi_{M}^{i})$ is a left $H_{i}$-module for $i=1,2$, then the following statements are equivalent:
	\begin{itemize}
		\item[(i)] $(M,\varphi_{M}^{1},\varphi_{M}^{2})$ is a left $\mathbb{H}$-module,
		\item[(ii)] $(M,\varphi_{M})$ is a left $H_{1}\sharp H_{2}$-module, where 
		\[\varphi_{M}\coloneqq \varphi_{M}^{1}\circ(H\otimes\varphi_{M}^{2}).\] 
	\end{itemize}
\end{theorem}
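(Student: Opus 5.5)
Throughout, $H_{1}\sharp H_{2}$ is the smash product of Theorem \ref{th-smash} built from the left $H_{2}$-module algebra $(H_{1},\Gamma_{H_{1}})$. Its product is
\[
\mu_{H_{1}\sharp H_{2}}=(\mu_{H}^{1}\otimes\mu_{H}^{2})\circ(H\otimes\Psi\otimes H),\qquad
\Psi=(\Gamma_{H_{1}}\otimes H)\circ(H\otimes c_{H,H})\circ(\delta_{H}\otimes H),
\]
and its unit is $\eta_{H}\otimes\eta_{H}$. The plan is to show that both conditions reduce to a single identity,
\begin{equation*}
\varphi_{M}^{2}\circ(H\otimes\varphi_{M}^{1})=\varphi_{M}^{1}\circ(\Gamma_{H_{1}}\otimes\varphi_{M}^{2})\circ(H\otimes c_{H,H}\otimes M)\circ(\delta_{H}\otimes H\otimes M),\tag{$\ast$}
\end{equation*}
which says that the $H_{2}$-action commutes past the $H_{1}$-action through the twist $\Psi$.

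First we check the unit axiom for $\varphi_{M}=\varphi_{M}^{1}\circ(H\otimes\varphi_{M}^{2})$. It holds automatically, since $\varphi_{M}\circ(\eta_{H}\otimes\eta_{H}\otimes M)={\rm id}_{M}$ follows from \eqref{actioneta} applied to both actions.

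For associativity, we expand both sides of \eqref{actionprod}. The left side is
\[
\varphi_{M}\circ(H\otimes H\otimes\varphi_{M})=\varphi_{M}^{1}\circ(H\otimes(\varphi_{M}^{2}\circ(H\otimes\varphi_{M}^{1})))\circ(H\otimes H\otimes H\otimes\varphi_{M}^{2}).
\]
For the right side, $\varphi_{M}\circ(\mu_{H_{1}\sharp H_{2}}\otimes M)$, we use associativity of the two module actions to split $\mu_{H}^{1}$ and $\mu_{H}^{2}$. This gives
\[
\varphi_{M}^{1}\circ(H\otimes(\varphi_{M}^{1}\circ(H\otimes\varphi_{M}^{2})\circ(\Psi\otimes M)))\circ(H\otimes H\otimes H\otimes\varphi_{M}^{2}).
\]
So (ii) is equivalent to the equality of these two expressions.

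If $(\ast)$ holds, the inner parts agree: unfolding $\Psi$, the composite $\varphi_{M}^{1}\circ(H\otimes\varphi_{M}^{2})\circ(\Psi\otimes M)$ is exactly the right-hand side of $(\ast)$, by naturality of $c$. Hence $(\ast)$ implies (ii). Conversely, if (ii) holds, we compose with $\eta_{H}$ in the first and last $H$-slots, $(\eta_{H}\otimes H\otimes H\otimes\eta_{H}\otimes M)$. The unit axioms then kill the outer $\varphi_{M}^{1}$ and the trailing $\varphi_{M}^{2}$, and what remains is precisely $(\ast)$. So (ii) is equivalent to $(\ast)$.

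It remains to show that (i) is equivalent to $(\ast)$. Substituting \eqref{mu2-exp}, $\mu_{H}^{2}=\mu_{H}^{1}\circ(H\otimes\Gamma_{H_{1}})\circ(\delta_{H}\otimes H)$, into the right side of \eqref{compatmodH} and using associativity of $\varphi_{M}^{1}$ gives
\[
\varphi_{M}^{1}\circ(H\otimes\varphi_{M}^{1})\circ(H\otimes\Gamma_{H_{1}}\otimes\Gamma_{M})\circ(\ldots),
\]
while \eqref{varphi2expression} writes $\varphi_{M}^{2}$ in terms of $\Gamma_{M}$.

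The cleanest route goes through ${}_{\mathbb H}{\sf Ac}$. By the isomorphism ${}_{\mathbb H}{\sf Mod}\cong{}_{\mathbb H}{\sf Ac}$, condition (i) is equivalent to \eqref{GMH1}, provided we know $(M,\Gamma_{M})$ is an $H_{2}$-module and \eqref{varphi2expression} holds. The second requirement holds for any pair of actions: its proof uses only coassociativity, the antipode of $H_{1}$ and the $H_{1}$-module axioms. So the remaining task is to show $(\ast)\Leftrightarrow$ \eqref{GMH1}. For this we use the two mutually inverse formulas \eqref{GammaMdef} and \eqref{varphi2expression}, which convert between $\Gamma_{M}$ and $\varphi_{M}^{2}$ by convolving with $\lambda_{H}^{1}$ or with ${\rm id}$ through $\varphi_{M}^{1}$.

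Going from \eqref{GMH1} to $(\ast)$, we substitute \eqref{varphi2expression} on the left of $(\ast)$ and apply \eqref{GMH1}. This is essentially the computation already done for ${\sf U}$.

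Going from $(\ast)$ to \eqref{GMH1}, we plug $(\ast)$ into $\Gamma_{M}\circ(H\otimes\varphi_{M}^{1})=\varphi_{M}^{1}\circ(\lambda_{H}^{1}\otimes\varphi_{M}^{2}\circ(H\otimes\varphi_{M}^{1}))\circ(\delta_{H}\otimes H\otimes M)$. We then use the fact that $\Gamma_{H_{1}}$ is an $H_{2}$-action to expand it along $\delta_{H}$, and collapse $\lambda_{H}^{1}\ast{\rm id}$ via \eqref{antipode}.

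One further point needs care: $(\ast)$ must also give that $(M,\Gamma_{M})$ is a left $H_{2}$-module. I expect this to follow by the argument in \cite[Lemma 2.11]{VGRRProj}, which should only use a compatibility of type $(\ast)$. If it does not, I would instead compare $(\ast)$ directly with \eqref{compatmodHprime}, using $\Gamma'_{H_{1}}$ and \eqref{mu2prime-exp}.

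The main obstacle is this last step: bookkeeping the symmetries to show that $(\ast)$ (the $\Gamma_{H_{1}}$-twisted form) and \eqref{compatmodH} (the $\mu_{H}^{2}$-and-$\Gamma_{M}$ form) are genuinely equivalent. If the direct route is messy, I would instead test $(\ast)$ in the special case $M=H$ (Example \ref{ex_HmodHbrace}), where it reduces to \eqref{compatHbrace}, and then mirror that algebraic manipulation for general $M$.
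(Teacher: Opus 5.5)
\textbf{There is a genuine gap, and it comes from your first choice.} You build $H_{1}\sharp H_{2}$ from the module algebra $(H_{1},\Gamma_{H_{1}})$. With that twist, the claimed equivalence (i)$\Leftrightarrow(\ast)$ is false, so the statement cannot hold for the action $\varphi_{M}=\varphi_{M}^{1}\circ(H\otimes\varphi_{M}^{2})$. Your reduction (ii)$\Leftrightarrow(\ast)$ is correct, and it works for any twist $\Psi$; the problem is only the link between $(\ast)$ and (i).

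Here is a counterexample. For the trivial Hopf brace $(H,H)$ one has $\Gamma_{H_{1}}=\varepsilon_{H}\otimes H$. So $(\ast)$ becomes $\varphi_{M}^{2}\circ(H\otimes\varphi_{M}^{1})=\varphi_{M}^{1}\circ(H\otimes\varphi_{M}^{2})\circ(c_{H,H}\otimes M)$, i.e.\ the two actions must commute. But $(H,\mu_{H},\mu_{H})$ is a left $\mathbb{H}_{\rm triv}$-module. For it, $(\ast)$ composed with $H\otimes H\otimes\eta_{H}$ forces $\mu_{H}=\mu_{H}\circ c_{H,H}$. This fails for any noncommutative $H$, e.g.\ $\mathbb{K}[G]$ with $G$ non-abelian.

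The sanity check you proposed would have caught this. For $M=H$, $(\ast)$ reads $g\circ(h\cdot k)=\lambda_{H}^{1}(g_{1})\cdot(g_{2}\circ h)\cdot(g_{3}\circ k)$. By contrast, \eqref{compatHbrace} is $g\circ(h\cdot k)=(g_{1}\circ h)\cdot\lambda_{H}^{1}(g_{2})\cdot(g_{3}\circ k)$, so the antipode sits on the wrong side. For the same reason, your step ``\eqref{GMH1}$\Rightarrow(\ast)$ as in the computation for ${\sf U}$'' breaks: that computation produces \eqref{compatmodH}, not $(\ast)$. In fact $(\ast)$ is just \eqref{GMH1} with $\Gamma_{M}$ replaced by $\varphi_{M}^{2}$, and these are different morphisms.

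\textbf{The repair.} Use, as the paper does, the other module algebra structure $(H_{1},\Gamma'_{H_{1}})$ from \eqref{GammaprimadefH1}, i.e.\ $\Psi^{H_{2}}_{H_{1}}=(\Gamma'_{H_{1}}\otimes H)\circ(H\otimes c_{H,H})\circ(\delta_{H}\otimes H)$. Then your $(\ast)$ becomes exactly \eqref{compatmodHprime}. The Remark after Definition~\ref{RGONdef} shows this is equivalent to \eqref{compatmodH}, using only associativity of $\varphi_{M}^{1}$, coassociativity and naturality of $c$.

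With that change, the rest of your argument is precisely the paper's proof: the unit check, the expansion of both sides of \eqref{actionprod}, and the specialization along $\eta_{H}\otimes H\otimes H\otimes\eta_{H}\otimes M$. The detour through ${}_{\mathbb H}{\sf Ac}$, \eqref{GMH1} and the $H_{2}$-module structure of $\Gamma_{M}$ then becomes unnecessary.

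If you want to keep $\Gamma_{H_{1}}$, the action that works is $\varphi_{M}^{1}\circ(H\otimes\Gamma_{M})$, with compatibility condition \eqref{GMH1}. That gives a different isomorphism of categories, not the statement as posed.
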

\begin{proof}
First note that, if  $\mathbb{H}=(H_{1},H_{2})$ is a Hopf brace in ${\sf C}$, then $(H_{1},\Gamma'_{H_{1}})$ is a left $H_{2}$-module algebra. As a consequence,  
$$H_{1}\sharp H_{2}=(H\otimes H,\eta_{H_{1}\sharp H_{2}}\coloneqq\eta_{H}\otimes\eta_{H},\mu_{H_{1}\sharp H_{2}}\coloneqq(\mu_{H}^{1}\otimes\mu_{H}^{2})\circ(H\otimes \Psi^{H_{2}}_{H_{1}}\otimes H))$$ is an algebra in ${\sf C}$, with $\Psi^{H_{2}}_{H_{1}}=(\Gamma'_{H_{1}}\otimes H)\circ(H\otimes c_{H,H})\circ(\delta_{H}\otimes H)$. 	

(i)$\implies$(ii): Condition \eqref{actioneta} for $(M,\varphi_{M})$ is straightforward and \eqref{actionprod} is obtained as follows:
	\begin{itemize}
		\itemindent=-32pt 
		\item[ ]$\hspace{0.38cm}\varphi_{M}\circ(H\otimes\varphi_{M})$
		\item[] $=\varphi_{M}^{1}\circ(H\otimes(\varphi_{M}^{2}\circ(H\otimes\varphi_{M}^{1})\circ(H\otimes H\otimes\varphi_{M}^{2})))$
		\item[] $=\varphi_{M}^{1}\circ(H\otimes(\varphi_{M}^{1}\circ (\Gamma'_{H_{1}}\otimes(\varphi_{M}^{2}\circ(H\otimes\varphi_{M}^{2})))\circ(H\otimes c_{H,H}\otimes H\otimes M)\circ(\delta_{H}\otimes H\otimes H\otimes M)))$ 
		\item[]$\hspace{0.38cm}${\footnotesize (by \eqref{compatmodHprime})}
		\item[] $=\varphi_{M}^{1}\circ(H\otimes\varphi_{M}^{2})\circ(((\mu_{H}^{1}\otimes\mu_{H}^{2})\circ(H\otimes\Psi_{H_{1}}^{H_{2}}\otimes H))\otimes M)$ {\footnotesize (by \eqref{actionprod} for $(M,\varphi_{M}^{i})$ for all $i=1,2$)}
		\item[] $=\varphi_{M}\circ(\mu_{H_{1}\sharp H_{2}}\otimes M)$.
	\end{itemize}
	
	(ii)$\implies$(i): It only remains to show that \eqref{compatmodHprime} holds. Since $(M,\varphi_{M})$ is a left $H_{1}\sharp H_{2}$-module, then 
	\begin{gather}\label{e1}
		\varphi_{M}\circ(H\otimes H\otimes\varphi_{M})=\varphi_{M}\circ(\mu_{H_{1}\sharp H_{2}}\otimes M).
	\end{gather}
	Therefore, \eqref{compatmodHprime} is obtained by composing \eqref{e1} on the right with $\eta_{H}\otimes H\otimes H\otimes\eta_{H}\otimes M$ and then using the unit property and \eqref{actioneta} for $(M,\varphi_{M}^{i})$ for all $i=1,2$.
\end{proof}

As a consequence of the previous theorem, the following functors are obtained.
\begin{lemma}
Let $\mathbb{H}=(H_{1},H_{2})$ be a Hopf brace in ${\sf C}$. There exists a functor 
\[{\sf F}\colon {}_{\mathbb{H}}{\sf Mod}\longrightarrow {}_{H_{1}\sharp H_{2}}{\sf Mod}\]
defined on objects by $${\sf F}((M,\varphi_{M}^{1},\varphi_{M}^{2}))\coloneqq(M,\varphi_{M}),$$ where $\varphi_{M}\coloneqq\varphi_{M}^{1}\circ(H\otimes\varphi_{M}^{2})$, and on morphisms by the identity.
\end{lemma}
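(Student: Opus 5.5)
The plan is to check the two things a functor needs: that ${\sf F}$ sends objects to objects and morphisms to morphisms; functoriality then comes for free. For objects, I would invoke Theorem \ref{equivalenceth}, implication (i)$\implies$(ii). If $(M,\varphi_{M}^{1},\varphi_{M}^{2})$ is a left $\mathbb{H}$-module, then in particular $(M,\varphi_{M}^{i})$ is a left $H_{i}$-module for $i=1,2$, so the hypotheses of that theorem hold. It follows that $(M,\varphi_{M}^{1}\circ(H\otimes\varphi_{M}^{2}))$ is a left $H_{1}\sharp H_{2}$-module, and hence ${\sf F}$ is well defined on objects.

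For morphisms, let $f\colon (M,\varphi_{M}^{1},\varphi_{M}^{2})\rightarrow (N,\varphi_{N}^{1},\varphi_{N}^{2})$ be a morphism in ${}_{\mathbb{H}}{\sf Mod}$, so $f$ is $H_{1}$-linear and $H_{2}$-linear. We must show that $f$ satisfies \eqref{mod_mor} for the actions $\varphi_{M}$ and $\varphi_{N}$. Use $H_{1}$-linearity first and then $H_{2}$-linearity:
\begin{gather*}
f\circ\varphi_{M}=f\circ\varphi_{M}^{1}\circ(H\otimes\varphi_{M}^{2})=\varphi_{N}^{1}\circ(H\otimes (f\circ\varphi_{M}^{2}))=\varphi_{N}^{1}\circ(H\otimes(\varphi_{N}^{2}\circ(H\otimes f)))=\varphi_{N}\circ(H\otimes H\otimes f).
\end{gather*}
Therefore $f$ is $H_{1}\sharp H_{2}$-linear, and setting ${\sf F}(f)=f$ is legitimate.

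Finally, since ${\sf F}$ acts as the identity on morphisms, it clearly preserves identities and composition. No step is a real obstacle: the substantive content has already been established in Theorem \ref{equivalenceth}, and the only part that requires any checking is the short linearity computation above.
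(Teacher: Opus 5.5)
Your proposal is correct and matches the paper's proof: well-definedness on objects comes from Theorem \ref{equivalenceth} (i)$\implies$(ii), and on morphisms from the same two-step linearity computation, first $H_{1}$-linearity and then $H_{2}$-linearity. Your explicit remark that functoriality is automatic because ${\sf F}$ is the identity on morphisms is the only addition, and the paper leaves that point implicit.
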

\begin{proof}
	The previous theorem shows that ${\sf F}$ is well-defined on objects. Let us show that it is well-defined on morphisms. If $f\colon (M,\varphi_{M}^{1},\varphi_{M}^{2})\rightarrow (N,\varphi_{N}^{1},\varphi_{N}^{2})$ is a morphism of left $\mathbb{H}$-modules, then
	\begin{itemize}
		\itemindent=-32pt 
		\item[ ]$\hspace{0.38cm}f\circ \varphi_{M}$
		\item[] $=f\circ\varphi_{M}^{1}\circ(H\otimes\varphi_{M}^{2})$
		\item[] $=\varphi_{N}^{1}\circ(H\otimes (f\circ\varphi_{M}^{2}))$ {\footnotesize (by \eqref{mod_mor} for $f\colon (M,\varphi_{M}^{1})\rightarrow (N,\varphi_{N}^{1})$)} 
		\item[] $=\varphi_{N}\circ(H\otimes H\otimes f)$ {\footnotesize (by \eqref{mod_mor} for $f\colon (M,\varphi_{M}^{2})\rightarrow (N,\varphi_{N}^{2})$).}\qedhere
	\end{itemize}
\end{proof}
\begin{lemma}
	Let $\mathbb{H}=(H_{1},H_{2})$ be a Hopf brace in ${\sf C}$. There exists a functor 
	\[{\sf G}\colon {}_{H_{1}\sharp H_{2}}{\sf Mod}\longrightarrow {}_{\mathbb{H}}{\sf Mod}\]
	acting on objects by 
	\[{\sf G}((M,\varphi_{M}))\coloneqq(M,\varphi_{M}^{1},\varphi_{M}^{2}),\]
	where $\varphi_{M}^{1}\coloneqq \varphi_{M}\circ(H\otimes\eta_{H}\otimes M)$ and $\varphi_{M}^{2}\coloneqq \varphi_{M}\circ(\eta_{H}\otimes H\otimes M),$ and on morphisms by the identity.
\end{lemma}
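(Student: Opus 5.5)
The plan is to check three things in turn: that $\varphi_{M}^{1}$ and $\varphi_{M}^{2}$ are module actions, that $(M,\varphi_{M}^{1},\varphi_{M}^{2})$ is a left $\mathbb{H}$-module, and that ${\sf G}$ is well defined on morphisms. Throughout I use that the maps $H\otimes\eta_{H}\colon H\to H\otimes H$ and $\eta_{H}\otimes H\colon H\to H\otimes H$ are algebra morphisms $H_{1}\to H_{1}\sharp H_{2}$ and $H_{2}\to H_{1}\sharp H_{2}$, respectively.

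\textbf{Step 1: the two actions.} For the first map, I would verify
\[
\mu_{H_{1}\sharp H_{2}}\circ((H\otimes\eta_{H})\otimes(H\otimes\eta_{H}))=(H\otimes\eta_{H})\circ\mu_{H}^{1}.
\]
This needs $\Psi^{H_{2}}_{H_{1}}\circ(\eta_{H}\otimes H)=H\otimes\eta_{H}$. It follows from $\delta_{H}\circ\eta_{H}=\eta_{H}\otimes\eta_{H}$, naturality of $c$, and $\Gamma'_{H_{1}}\circ(\eta_{H}\otimes H)={\rm id}_{H}$, which holds because $(H_{1},\Gamma'_{H_{1}})$ is a left $H_{2}$-module and $\eta_{H}$ is the unit of $H_{2}$.

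For the second map, I would verify $\Psi^{H_{2}}_{H_{1}}\circ(H\otimes\eta_{H})=\eta_{H}\otimes H$. This follows from the module algebra axiom $\Gamma'_{H_{1}}\circ(H\otimes\eta_{H})=\varepsilon_{H}\otimes\eta_{H}$ together with the counit property.

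Composing with $\varphi_{M}$ and using \eqref{actionprod} and \eqref{actioneta} for $(M,\varphi_{M})$, both $(M,\varphi_{M}^{1})$ and $(M,\varphi_{M}^{2})$ are then left modules, since $\eta_{H_{1}\sharp H_{2}}=\eta_{H}\otimes\eta_{H}$.

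\textbf{Step 2: $\mathbb{H}$-module structure.} By Theorem \ref{equivalenceth}, it suffices to show that $\varphi_{M}^{1}\circ(H\otimes\varphi_{M}^{2})=\varphi_{M}$. Rewrite the left-hand side as
\[
\varphi_{M}\circ(H\otimes\eta_{H}\otimes\varphi_{M})\circ(H\otimes\eta_{H}\otimes H\otimes M),
\]
then use \eqref{actionprod} to turn it into $\varphi_{M}\circ(\mu_{H_{1}\sharp H_{2}}\otimes M)$ applied to $H\otimes\eta_{H}\otimes\eta_{H}\otimes H\otimes M$. The remaining identity is
\[
\mu_{H_{1}\sharp H_{2}}\circ(H\otimes\eta_{H}\otimes\eta_{H}\otimes H)={\rm id}_{H\otimes H}.
\]
This holds because $\Psi^{H_{2}}_{H_{1}}\circ(\eta_{H}\otimes\eta_{H})=\eta_{H}\otimes\eta_{H}$, by either of the formulas in Step 1, combined with the unit properties of $\mu_{H}^{1}$ and $\mu_{H}^{2}$.

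\textbf{Step 3: morphisms.} If $f$ is $H_{1}\sharp H_{2}$-linear, composing \eqref{mod_mor} with $H\otimes\eta_{H}\otimes M$ shows that $f$ is $H_{1}$-linear, and composing with $\eta_{H}\otimes H\otimes M$ shows that $f$ is $H_{2}$-linear. Functoriality is then trivial, since ${\sf G}$ is the identity on morphisms.

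I expect the main obstacle to be Step 1, specifically the unit identities for $\Psi^{H_{2}}_{H_{1}}$. These rely on the left $H_{2}$-module algebra structure of $(H_{1},\Gamma'_{H_{1}})$ quoted from \cite[Theorem 6.4]{BRZ}. Once those identities are in place, everything else is formal.
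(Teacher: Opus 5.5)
Your proof is correct, and Steps 1 and 3 match the paper's argument. The paper proves $\Psi^{H_{2}}_{H_{1}}\circ(\eta_{H}\otimes H)=H\otimes\eta_{H}$ as \eqref{psieta}, and it covers the $H_{2}$-module part only with ``by similar arguments''. Your explicit use of the module-algebra unit axiom $\Gamma'_{H_{1}}\circ(H\otimes\eta_{H})=\varepsilon_{H}\otimes\eta_{H}$ to obtain $\Psi^{H_{2}}_{H_{1}}\circ(H\otimes\eta_{H})=\eta_{H}\otimes H$ is exactly what that step needs, and it is slightly more careful than the paper.

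The real difference is Step 2. The paper verifies the compatibility \eqref{compatmodHprime} directly: it shows that both sides reduce to $\varphi_{M}\circ(\Psi^{H_{2}}_{H_{1}}\otimes M)$, using $\mu_{H_{1}\sharp H_{2}}\circ(\eta_{H}\otimes H\otimes H\otimes\eta_{H})=\Psi^{H_{2}}_{H_{1}}$ and \eqref{psieta}. You instead prove $\varphi_{M}^{1}\circ(H\otimes\varphi_{M}^{2})=\varphi_{M}$ and then invoke the implication (ii)$\Rightarrow$(i) of Theorem \ref{equivalenceth}. This is legitimate: that theorem is proved independently of the lemma, and your Step 1 supplies its hypothesis that $(M,\varphi_{M}^{i})$ is a left $H_{i}$-module.

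Your route is more economical. The identity you prove is precisely the computation the paper needs later for ${\sf F}\circ{\sf G}$ being the identity in Theorem \ref{mainth}, so you do that work once and get well-definedness of ${\sf G}$ without a separate check. The paper's route keeps the lemma self-contained and shows explicitly that the compatibility condition is just the commutation relation encoded by $\Psi^{H_{2}}_{H_{1}}$ in $H_{1}\sharp H_{2}$. In substance the two coincide, because the proof of (ii)$\Rightarrow$(i) in Theorem \ref{equivalenceth} is itself this unit-insertion computation.
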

\begin{proof}
	In the development of this proof, we will use repeatedly the following identity:
	\begin{gather}\label{psieta}
		\Psi_{H_{1}}^{H_{2}}\circ(\eta_{H}\otimes H)=H\otimes\eta_{H},
	\end{gather}
	whose justification is given below:
	\begin{itemize}
		\itemindent=-32pt 
		\item[ ]$\hspace{0.38cm}\Psi_{H_{1}}^{H_{2}}\circ(\eta_{H}\otimes H)$
		\item[]$=(\Gamma'_{H_{1}}\otimes H)\circ(H\otimes c_{H,H})\circ((\delta_{H}\circ\eta_{H})\otimes H)$
		\item[]$=((\Gamma'_{H_{1}}\circ(\eta_{H}\otimes H))\otimes H)\circ c_{H,H}\circ(\eta_{H}\otimes H)$ {\footnotesize (by the condition of coalgebra morphism for $\eta_{H}$)} 
		\item[]$=H\otimes\eta_{H}$ {\footnotesize (by \eqref{actioneta} for $(H_{1},\Gamma'_{H_{1}})$ and the naturality of $c$)}.
	\end{itemize}
	
	Let us start the proof by showing that $(M,\varphi_{M}^{1})$ is a left $H_{1}$-module. On the one hand, condition \eqref{actioneta} is straightforward by \eqref{actioneta} for $(M,\varphi_{M})$, whereas, on the other hand, condition \eqref{actionprod} is a consequence of
	\begin{itemize}
		\itemindent=-32pt 
		\item[ ]$\hspace{0.38cm}\varphi_{M}^{1}\circ(H\otimes\varphi_{M}^{1})$
		\item[] $=\varphi_{M}\circ(H\otimes\eta_{H}\otimes(\varphi_{M}\circ(H\otimes\eta_{H}\otimes M)))$
		\item[] $=\varphi_{M}\circ(((\mu_{H}^{1}\otimes\mu_{H}^{2})\circ(H\otimes (\Psi_{H_{1}}^{H_{2}}\circ(\eta_{H}\otimes H))\otimes \eta_{H}))\otimes M)$ {\footnotesize (by \eqref{actionprod} for $(M,\varphi_{M})$)}
		\item[] $=\varphi_{M}\circ(\mu_{H}^{1}\otimes\eta_{H}\otimes M)$ {\footnotesize (by \eqref{psieta} and the unit property for $H_{2}$)}
		\item[] $=\varphi_{M}^{1}\circ(\mu_{H}^{1}\otimes M).$ 
	\end{itemize}
	
	By similar arguments, $(M,\varphi_{M}^{2})$ is also a left $H_{2}$-module. Then, to conclude that ${\sf G}$ is well-defined on objects, it only remains to show that \eqref{compatmodHprime} holds for $(M,\varphi_{M}^{1},\varphi_{M}^{2})$. Developing right hand side of \eqref{compatmodHprime}, we obtain that
	\begin{itemize}
		\itemindent=-32pt 
		\item[ ]$\hspace{0.38cm}\varphi_{M}^{1}\circ(\Gamma'_{H_{1}}\otimes\varphi_{M}^{2})\circ(H\otimes c_{H,H}\otimes M)\circ(\delta_{H}\otimes H\otimes M)$
		\item[]$=\varphi_{M}\circ(H\otimes\eta_{H}\otimes M)\circ(\Gamma'_{H_{1}}\otimes(\varphi_{M}\circ(\eta_{H}\otimes H\otimes M)))\circ(H\otimes c_{H,H}\otimes M)\circ(\delta_{H}\otimes H\otimes M)$
		\item[] $=\varphi_{M}\circ(((\mu_{H}^{1}\otimes\mu_{H}^{2})\circ(H\otimes(\Psi_{H_{1}}^{H_{2}}\circ(\eta_{H}\otimes\eta_{H}))\otimes H)\circ(\Gamma'_{H_{1}}\otimes H)\circ(H\otimes c_{H,H})\circ(\delta_{H}\otimes H))\otimes M)$
		\item[]$\hspace{0.38cm}${\footnotesize (by \eqref{actionprod} for $(M,\varphi_{M})$)}
		\item[] $=\varphi_{M}\circ(((\Gamma^{\prime}_{H_{1}}\otimes H)\circ(H\otimes c_{H,H})\circ(\delta_{H}\otimes H))\otimes M)$
		\item[]$\hspace{0.38cm}${\footnotesize (by \eqref{psieta} and the unit property for $H_{1}$ and $H_{2}$)}
	\end{itemize}
	whereas, from the left hand side of \eqref{compatmodHprime} it is obtained that
	\begin{itemize}
		\itemindent=-32pt 
		\item[ ]$\hspace{0.38cm}\varphi_{M}^{2}\circ(H\otimes\varphi_{M}^{1})$
		\item[] $=\varphi_{M}\circ(\eta_{H}\otimes H\otimes(\varphi_{M}\circ(H\otimes\eta_{H}\otimes M)))$
		\item[] $=\varphi_{M}\circ(((\mu_{H}^{1}\otimes\mu_{H}^{2})\circ(\eta_{H}\otimes\Psi_{H_{1}}^{H_{2}}\otimes\eta_{H}))\otimes M)$ {\footnotesize (by \eqref{actionprod} for $(M,\varphi_{M})$)}
		\item[]$=\varphi_{M}\circ(\Psi_{H_{1}}^{H_{2}}\otimes M)$ {\footnotesize (by the unit property for $H_{1}$ and $H_{2}$)}
		\item[]$=\varphi_{M}\circ(((\Gamma'_{H_{1}}\otimes H)\circ(H\otimes c_{H,H})\circ(\delta_{H}\otimes H))\otimes M)$.
	\end{itemize}
	That is, computing both sides of \eqref{compatmodHprime} yields the same result, and hence \eqref{compatmodHprime} holds for $(M,\varphi_{M}^{1},\varphi_{M}^{2})$. 
	
	Regarding morphisms, ${\sf G}$ is well-defined because, if $f\colon (M,\varphi_{M})\rightarrow (N,\varphi_{N})$ is a morphism in ${}_{H_{1}\sharp H_{2}}{\sf Mod}$, then 
	$f\colon (M,\varphi_{M}^{1})\rightarrow (N,\varphi_{N}^{1})$ is $H_{1}$-linear. Indeed,
	\begin{itemize}
		\itemindent=-32pt 
		\item[ ]$\hspace{0.38cm}f\circ\varphi_{M}^{1}$
		\item[]$=f\circ\varphi_{M}\circ(H\otimes\eta_{H}\otimes M)$
		\item[]$=\varphi_{N}\circ(H\otimes\eta_{H}\otimes f)$ {\footnotesize (by \eqref{mod_mor} for $f\colon (M,\varphi_{M})\rightarrow (N,\varphi_{N})$)}
		\item[]$=\varphi_{N}^{1}\circ (H\otimes f)$.
	\end{itemize}
	Similar arguments can be followed to prove that $f\colon (M,\varphi_{M}^{2})\rightarrow (N,\varphi_{N}^{2})$ is also $H_{2}$-linear.
\end{proof}
The functors constructed in the previous lemmas yield the required isomorphism of categories.
\begin{theorem}\label{mainth}
	Let $\mathbb{H}$ be a Hopf brace in ${\sf C}$. The categories ${}_{\mathbb{H}}{\sf Mod}$ and ${}_{H_{1}\sharp H_{2}}{\sf Mod}$ are isomorphic.
\end{theorem}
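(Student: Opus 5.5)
We will show that the functors ${\sf F}$ and ${\sf G}$ constructed in the two previous lemmas are mutually inverse. Both act as the identity on morphisms, so it suffices to check that ${\sf G}\circ{\sf F}$ and ${\sf F}\circ{\sf G}$ are the identity on objects. Both composites are then identity functors, which gives the isomorphism of categories.

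\emph{The composite ${\sf G}\circ{\sf F}$.} Start from $(M,\varphi_{M}^{1},\varphi_{M}^{2})$ in ${}_{\mathbb{H}}{\sf Mod}$ and put $\varphi_{M}=\varphi_{M}^{1}\circ(H\otimes\varphi_{M}^{2})$. The first action recovered by ${\sf G}$ is
$$\varphi_{M}\circ(H\otimes\eta_{H}\otimes M)=\varphi_{M}^{1}\circ(H\otimes(\varphi_{M}^{2}\circ(\eta_{H}\otimes M)))=\varphi_{M}^{1},$$
by \eqref{actioneta} for $(M,\varphi_{M}^{2})$. Similarly, the second action is
$$\varphi_{M}\circ(\eta_{H}\otimes H\otimes M)=\varphi_{M}^{1}\circ(\eta_{H}\otimes\varphi_{M}^{2})=\varphi_{M}^{2},$$
by \eqref{actioneta} for $(M,\varphi_{M}^{1})$. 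Here we use that the units of $H_{1}$ and $H_{2}$ coincide, both being $\eta_{H}$. Hence ${\sf G}({\sf F}(M,\varphi_{M}^{1},\varphi_{M}^{2}))=(M,\varphi_{M}^{1},\varphi_{M}^{2})$.

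\emph{The composite ${\sf F}\circ{\sf G}$.} Start from $(M,\varphi_{M})$ in ${}_{H_{1}\sharp H_{2}}{\sf Mod}$. We must show
$$\varphi_{M}\circ(H\otimes\eta_{H}\otimes M)\circ(H\otimes\varphi_{M})\circ(H\otimes\eta_{H}\otimes H\otimes M)=\varphi_{M}.$$
By \eqref{actionprod} for $(M,\varphi_{M})$, the left-hand side equals
$$\varphi_{M}\circ\big(((\mu_{H}^{1}\otimes\mu_{H}^{2})\circ(H\otimes(\Psi_{H_{1}}^{H_{2}}\circ(\eta_{H}\otimes\eta_{H}))\otimes H))\otimes M\big).$$
By \eqref{psieta}, $\Psi_{H_{1}}^{H_{2}}\circ(\eta_{H}\otimes\eta_{H})=\eta_{H}\otimes\eta_{H}$. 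The unit properties of $\mu_{H}^{1}$ and $\mu_{H}^{2}$ then reduce the expression to $\varphi_{M}$.

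The only step needing any care is this last one: identifying the product in $H_{1}\sharp H_{2}$ of $h\otimes\eta_{H}$ with $\eta_{H}\otimes g$ as $h\otimes g$, which is exactly what \eqref{psieta} provides. Everything else is immediate from the unit axioms.
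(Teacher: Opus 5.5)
Your proof is correct and follows the paper's argument essentially verbatim: ${\sf G}\circ{\sf F}$ is the identity by \eqref{actioneta} for $(M,\varphi_{M}^{1})$ and $(M,\varphi_{M}^{2})$, and ${\sf F}\circ{\sf G}$ is the identity by \eqref{actionprod} for $(M,\varphi_{M})$ together with \eqref{psieta} and the unit properties of $\mu_{H}^{1}$ and $\mu_{H}^{2}$. Your remark that checking objects suffices, because both functors are the identity on morphisms, is the reasoning the paper leaves implicit.
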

\begin{proof}
	First, it is easy to obtain that ${\sf G}\circ{\sf F}={\sf id}_{{}_{\mathbb{H}}{\sf Mod}}$. Indeed, if $(M,\varphi_{M}^{1},\varphi_{M}^{2})\in{}_{\mathbb{H}}{\sf Mod}$, then
	\[({\sf G}\circ{\sf F})((M,\varphi_{M}^{1},\varphi_{M}^{2}))={\sf G}((M,\varphi_{M}^{1}\circ(H\otimes\varphi_{M}^{2})))=(M,\varphi_{M}^{1},\varphi_{M}^{2}),\]
	where the last equality is direct by \eqref{actioneta} for $(M,\varphi_{M}^{1})$ and $(M,\varphi_{M}^{2})$.
	
	Converserly, if $(M,\varphi_{M})\in{}_{H_{1}\sharp H_{2}}{\sf Mod}$, then
	\begin{align*}
		({\sf F}\circ{\sf G})((M,\varphi_{M}))=\,&{\sf F}((M,\varphi_{M}\circ(H\otimes\eta_{H}\otimes M),\varphi_{M}\circ(\eta_{H}\otimes H\otimes M)))\\=\,&(M,\varphi_{M}\circ(H\otimes\eta_{H}\otimes(\varphi_{M}\circ(\eta_{H}\otimes H\otimes M)))),
	\end{align*}
	and
	\begin{itemize}
		\itemindent=-32pt 
		\item[ ]$\hspace{0.38cm}\varphi_{M}\circ(H\otimes\eta_{H}\otimes(\varphi_{M}\circ(\eta_{H}\otimes H\otimes M)))$
		\item[]$=\varphi_{M}\circ(((\mu_{H}^{1}\otimes\mu_{H}^{2})\circ(H\otimes(\Psi^{H_{2}}_{H_{1}}\circ(\eta_{H}\otimes\eta_{H}))\otimes H))\otimes M)$ {\footnotesize (by \eqref{actionprod} for $(M,\varphi_{M})$)}
		\item[]$=\varphi_{M}$ {\footnotesize (by \eqref{psieta} and the unit property for $H_{1}$ and $H_{2}$)}.
	\end{itemize}
	Therefore, ${\sf F}\circ{\sf G}={\sf id}_{H_{1}\sharp H_{2}},$ what concludes the proof.
\end{proof}

\begin{corollary}\label{mainthcor}
	Let $\mathbb{H}$ be a profinite Hopf brace in ${\sf C}$. The categories ${}_{\mathbb{H}}{\sf Mod}$, ${}_{H_{1}\sharp H_{2}}{\sf Mod}$ and $\;_{H_1}{\sf M}^{H_{2}^{\ast}}$  are isomorphic.
\end{corollary}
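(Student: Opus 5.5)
The plan is to compose two isomorphisms already at hand. The first isomorphism ${}_{\mathbb{H}}{\sf Mod}\cong{}_{H_{1}\sharp H_{2}}{\sf Mod}$ is Theorem \ref{mainth}, realized by the mutually inverse functors ${\sf F}$ and ${\sf G}$, and it needs no finiteness hypothesis. For the second, I will apply Theorem \ref{smash} with $H=H_{2}$ and $A=H_{1}$. The left $H_{2}$-module algebra structure must be the one used to build $H_{1}\sharp H_{2}$ in the proof of Theorem \ref{equivalenceth}, namely $(H_{1},\Gamma'_{H_{1}})$. By \cite[Theorem 6.4]{BRZ} this is a left $H_{2}$-module algebra whenever $\mathbb{H}$ is a Hopf brace.

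The only point to make precise is what ``profinite Hopf brace'' means. I take it to mean that the underlying object $H$ is profinite. Then $H_{2}$ is a profinite Hopf algebra, and $H_{2}^{\ast}$ is a Hopf algebra with the structure recalled in the preliminaries. Since $H_{1}$ is a left $H_{2}$-module algebra via $\Gamma'_{H_{1}}$, it becomes a right $H_{2}^{\ast}$-comodule algebra via
\[\widehat{\rho}_{H_{1}}=(\Gamma'_{H_{1}}\otimes H^{\ast})\circ(H\otimes c_{H^{\ast},H})\circ(a_{H}(K)\otimes H).\]
This coaction determines the category ${}_{H_{1}}{\sf M}^{H_{2}^{\ast}}$ of left-right Doi-Hopf modules. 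I will state explicitly that the coaction is built from $\Gamma'_{H_{1}}$ and not from $\Gamma_{H_{1}}$, because the smash product $H_{1}\sharp H_{2}$ in Theorem \ref{mainth} uses $\Psi^{H_{2}}_{H_{1}}$, which is defined through $\Gamma'_{H_{1}}$.

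Theorem \ref{smash} then provides mutually inverse functors ${\sf S}$ and ${\sf R}$ between ${}_{H_{1}\sharp H_{2}}{\sf Mod}$ and ${}_{H_{1}}{\sf M}^{H_{2}^{\ast}}$. The composites ${\sf S}\circ{\sf F}$ and ${\sf G}\circ{\sf R}$ are therefore mutually inverse isomorphisms between ${}_{\mathbb{H}}{\sf Mod}$ and ${}_{H_{1}}{\sf M}^{H_{2}^{\ast}}$.

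If desired, I will also record the explicit form of these composites. A left $\mathbb{H}$-module $(M,\varphi_{M}^{1},\varphi_{M}^{2})$ is sent to
\[\Bigl(M,\varphi_{M}^{1},(\varphi_{M}^{2}\otimes H^{\ast})\circ(H\otimes c_{H^{\ast},M})\circ(a_{H}(K)\otimes M)\Bigr).\]
To see this, use $\varphi_{M}^{1}\circ(H\otimes\varphi_{M}^{2})\circ(H\otimes\eta_{H}\otimes M)=\varphi_{M}^{1}$ and the analogous identity with $\eta_{H}$ in the first slot. There is no real obstacle in this argument; the only care needed is to keep the module-algebra structure consistent, as noted above.
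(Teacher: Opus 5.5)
Your proposal is correct and is exactly the paper's argument: compose the isomorphism of Theorem \ref{mainth} with Theorem \ref{smash}, applied to $H=H_{2}$ and the left $H_{2}$-module algebra $(H_{1},\Gamma'_{H_{1}})$. Your point that the $H_{2}^{\ast}$-coaction must come from $\Gamma'_{H_{1}}$ (the action defining $\Psi^{H_{2}}_{H_{1}}$), and your explicit form of the composite functor, are both correct details that the paper leaves implicit.
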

\begin{proof}
The proof follows from the previous theorem and Theorem \ref{smash}.
\end{proof}

 Hence, as a consequence of the previous corollary, modules over a profinite Hopf brace $\mathbb{H}$ can be seen as Doi-Hopf modules, more concretely,  as left-right $(H_{1}, H_{2}^{\ast})$-Doi-Hopf modules.  

On the other hand, note that, as was pointed out in Example \ref{smashHA}, if $(H_{1},\Gamma'_{H_{1}})$ lies in the cocommutativity class of $H_{2}$, then $H_{1}\sharp H_{2}$ is a Hopf algebra in ${\sf C}$. Therefore, under such condition, modules over $\mathbb{H}$ are identified with modules over the Hopf algebra $H_{1}\sharp H_{2}$. 
\begin{example}
	In Example \ref{skewbraces} we have seen that, if ${\sf C}={\sf Set}$, any Hopf brace is a skew brace. Hence, as a particular case of Theorem \ref{mainth} with ${\sf C}={\sf Set}$, the following is obtained: If $(G,\cdot,\circ)$ is a skew brace, then the category of modules over $(G,\cdot,\circ)$ is isomorphic to the category of modules over the semidirect product group $G_{\cdot}\ltimes G_{\circ}$, $${}_{(G,\cdot,\circ)}{\sf Mod}\cong{}_{G_{\cdot}\ltimes G_{\circ}}{\sf Mod},$$
	where an object in ${}_{(G,\cdot,\circ)}{\sf Mod}$ is a triple $(M,\rhd,\blacktriangleright)$ such that $(M,\rhd)$ is a module over the group $G_{\cdot}$ and $(M,\blacktriangleright)$ is a module over the group $G_{\circ}$ satisfying the following identity: 
	\[g\blacktriangleright(h\rhd m)=((g\circ h)\cdot g^{-1})\rhd(g\blacktriangleright m),\textnormal{ for all }g,h\in G\textnormal{ and }m\in M.\]
	
As we observed in Example \ref{galg},  if $(G,\cdot,\circ)$ is a skew brace, then  $\mathbb{K}[\mathbb{G}]=(\mathbb{K}[G_{\cdot}],\mathbb{K}[G_{\circ}])$ forms a Hopf brace in $\;_{\mathbb{K}}{\sf Vect}$. Therefore, again by Theorem \ref{mainth}, we obtain that the category of modules over $\mathbb{K}[\mathbb{G}]$ is isomorphic to the category of modules over the Hopf algebra $\mathbb{K}[G_{\cdot}]\sharp \mathbb{K}[G_{\circ}]$. As was pointed out in \cite[Example 3.14]{Maj} (see also \cite{Tak}), $\mathbb{K}[G_{\cdot}]\sharp \mathbb{K}[G_{\circ}]=\mathbb{K}[G_{\cdot}\ltimes G_{\circ}]$. Thus, we obtain the following chain of categorical isomorphisms:
\[{}_{\mathbb{K}[\mathbb{G}]}{\sf Mod}\cong {}_{\mathbb{K}[G_{\cdot}]\sharp \mathbb{K}[G_{\circ}]}{\sf Mod}={}_{\mathbb{K}[G_{\cdot}\ltimes G_{\circ}]}{\sf Mod}.\]

These chain of isomorphisms extend \cite[Remarks 2.2 and 3.2]{KOZTSANG}, where the authors only construct a functor from ${}_{\mathbb{K}[\mathbb{G}]}{\sf Mod}$ to ${}_{\mathbb{K}[G_{\cdot}\ltimes G_{\circ}]}{\sf Mod}$, but they do not find the equivalence.

Now, by Corollary \ref{mainthcor}, if we assume that $G$ is a finite group, then it is also obtained that
\[{}_{\mathbb{K}[\mathbb{G}]}{\sf Mod}\cong {}_{\mathbb{K}[G_{\cdot}\ltimes G_{\circ}]}{\sf Mod}\cong {}_{\mathbb{K}[G_{\cdot}]}{\sf M}^{{\mathbb{K}[G_{\circ}]}^{\ast}},\]
extending the chain above one step further.
\end{example}
\section{On the notion of modules over a Hopf brace in Zhu's sense}
\label{comparison}

An alternative definition of left module over a Hopf brace, different from the one provided in the previous section, was given by Zhu in \cite[Definition 3.1]{ZHU}. To avoid confusion with the notion considered in Section \ref{sec_equivalence}, we will henceforth refer to these structures as ${\sf Z}$-modules, where this name comes from the fact that these are modules in the sense of Zhu. The definition is as follows:
\begin{definition}	\label{ZHUdef}
	Let $\mathbb{H}=(H_{1},H_{2})$ be a Hopf brace in ${\sf C}$. A left ${\sf Z}$-module over $\mathbb{H}$ is a triple $(M,\varphi_{M}^{1},\varphi_{M}^{2})$ where $(M,\varphi_{M}^{1})$ is a left $H_{1}$-module, $(M,\varphi_{M}^{2})$ is a left $H_{2}$-module and the equality 	
\begin{align}\label{realZhu}
		\begin{split}
		&(\varphi_{M}^{2}\otimes H)\circ (H\otimes c_{H,M})\circ(\delta_{H}\otimes\varphi_{M}^{1})\\=\,&(\varphi_{M}^{1}\otimes H)\circ(\mu_{H}^{2}\otimes((\Gamma_{M}\otimes H)\circ (H\otimes c_{H,M})\circ ((c_{H,H}\circ\delta_{H})\otimes M)))\\&\circ(H\otimes c_{H,H}\otimes M)\circ (\delta_{H}\otimes H\otimes M)
		\end{split}
\end{align}  
	holds.
	
	A morphism of left ${\sf Z}$-modules over $\mathbb{H}$ is defined as in the case of Definition \ref{RGONdef} and we will denote the corresponding category  by ${}_{\mathbb{H}}{\sf Mod}^{{\sf Z}}$. 
\end{definition}

As was proved in \cite[Lemma 3.2]{ZHU}, given a Hopf brace $\mathbb{H}=(H_{1},H_{2})$ in ${\sf C}$, if $(M,\varphi_{M}^{1})$ is a left $H_{1}$-module and $(M,\varphi_{M}^{2})$ is a left $H_{2}$-module, then equality \eqref{realZhu} is equivalent to equalities \eqref{compatmodH} and  
\begin{align}\label{condModZhu}
\begin{split}
&(\varphi_{M}^{2}\otimes H)\circ(H\otimes c_{H,M})\circ(\delta_{H}\otimes M)\\=\,&(\varphi_{M}^{1}\otimes H)\circ(H\otimes((\Gamma_{M}\otimes H)\circ (H\otimes c_{H,M})\circ ((c_{H,H}\circ\delta_{H})\otimes M)))\circ(\delta_{H}\otimes M).
\end{split}
\end{align}  
	
As a consequence, ${}_{\mathbb{H}}{\sf Mod}^{{\sf Z}}$ is a full subcategory of ${}_{\mathbb{H}}{\sf Mod}$. In general, these two categories are different since $(H,\mu_{H}^{1},\mu_{H}^{2})$ is an example of module over $\mathbb{H}$ (see Example \ref{ex_HmodHbrace}), but it is not in the sense of Definition \ref{ZHUdef} because the triple $(H,\mu_{H}^{1},\mu_{H}^{2})$ does not satisfy condition \eqref{condModZhu}.  However in \cite[Remark 3.3]{ZHU}, Zhu observes that under cocommutativity conditions, equality \eqref{condModZhu} is trivial, so this would mean that both notions would be equivalent. However, it is not necessary to be that restrictive. The following result shows the conditions under which an object in ${}_{\mathbb{H}}{\sf Mod}$ satisfies condition \eqref{condModZhu}, and therefore it is an object in the category ${}_{\mathbb{H}}{\sf Mod}^{{\sf Z}}$. 

\begin{theorem}\label{firstM}
	Let ${\mathbb H}$ be a Hopf brace in ${\sf C}$. Given $(M,\varphi_{M}^{1},\varphi_{M}^{2})$ an object in the category ${}_{\mathbb{H}}{\sf Mod}$, then  $(M,\varphi_{M}^{1},\varphi_{M}^{2})$ is an object in ${}_{\mathbb{H}}{\sf Mod}^{{\sf Z}}$ if and only if $(M,\Gamma_{M})$ belongs to the cocommutativity class of $H_{2}$.
\end{theorem}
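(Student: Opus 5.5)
By \cite[Lemma 3.2]{ZHU}, recalled just before the statement, \eqref{realZhu} is equivalent to the conjunction of \eqref{compatmodH} and \eqref{condModZhu}. Since $(M,\varphi_{M}^{1},\varphi_{M}^{2})$ is already an object of ${}_{\mathbb{H}}{\sf Mod}$, \eqref{compatmodH} holds. The theorem therefore reduces to proving that, for a left $\mathbb{H}$-module, \eqref{condModZhu} holds if and only if \eqref{ccclassGammaM} holds. The main tool is \eqref{varphi2expression}, that is, $\varphi_{M}^{2}=\varphi_{M}^{1}\circ(H\otimes\Gamma_{M})\circ(\delta_{H}\otimes M)$. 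It lets us rewrite the left-hand side of \eqref{condModZhu} in a form that matches the right-hand side except for the twist $c_{H,H}\circ\delta_{H}$.

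\textbf{First step: rewrite the left-hand side of \eqref{condModZhu}.} Substitute \eqref{varphi2expression} into $(\varphi_{M}^{2}\otimes H)\circ(H\otimes c_{H,M})\circ(\delta_{H}\otimes M)$. By coassociativity of $\delta_{H}$ and naturality of $c$ this gives
\[
(\varphi_{M}^{1}\otimes H)\circ(H\otimes((\Gamma_{M}\otimes H)\circ(H\otimes c_{H,M})\circ(\delta_{H}\otimes M)))\circ(\delta_{H}\otimes M).
\]
In Sweedler-type shorthand, both sides are $h_{1}\rhd\Gamma(h_{2},m)\otimes h_{3}$, with the last two legs swapped on the right. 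This expression coincides with the right-hand side of \eqref{condModZhu}, except that $\delta_{H}$ appears in the inner factor where the right-hand side has $c_{H,H}\circ\delta_{H}$.

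\textbf{Sufficiency.} If $(M,\Gamma_{M})$ lies in the cocommutativity class of $H_{2}$, then \eqref{ccclassGammaM} replaces the inner $\delta_{H}$ by $c_{H,H}\circ\delta_{H}$. Hence \eqref{condModZhu} holds, and $M$ is a ${\sf Z}$-module.

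\textbf{Necessity.} Assume \eqref{condModZhu}, so by the first step
\[
(\varphi_{M}^{1}\otimes H)\circ(H\otimes X)\circ(\delta_{H}\otimes M)=(\varphi_{M}^{1}\otimes H)\circ(H\otimes Y)\circ(\delta_{H}\otimes M),
\]
where $X$ and $Y$ denote the two sides of \eqref{ccclassGammaM}. The idea is to cancel the outer $H_{1}$-action using the antipode $\lambda_{H}^{1}$. Compose both sides on the right with $((\lambda_{H}^{1}\otimes H)\circ\delta_{H})\otimes M$, and on the left with $\varphi_{M}^{1}\otimes H$ after tensoring with the extra leg. By \eqref{actionprod} for $(M,\varphi_{M}^{1})$ the two outer actions combine into $\varphi_{M}^{1}\circ(\mu_{H}^{1}\otimes M)$. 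Coassociativity then produces $\mu_{H}^{1}\circ(\lambda_{H}^{1}\otimes H)\circ\delta_{H}=\varepsilon_{H}\otimes\eta_{H}$ by \eqref{antipode}, and \eqref{actioneta} and the counit property leave exactly $X=Y$, i.e.\ \eqref{ccclassGammaM}.

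The only delicate point is this cancellation step. One must check that the extra coproduct legs introduced by precomposition are placed so that coassociativity really yields $\lambda_{H}^{1}\ast{\rm id}_{H}$ acting on $M$. This requires tracking the symmetries $c_{H,M}$, which are harmless by naturality; everything else is routine.
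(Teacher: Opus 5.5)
Your proposal is correct and follows essentially the paper's proof: sufficiency via \eqref{varphi2expression}, coassociativity and \eqref{ccclassGammaM}, and necessity by cancelling the outer $H_{1}$-action with $\lambda_{H}^{1}\ast{\rm id}_{H}$, \eqref{actionprod} and \eqref{actioneta}. The only cosmetic difference is in the necessity direction. The paper starts from the definition \eqref{GammaMdef} of $\Gamma_{M}$ and substitutes \eqref{condModZhu} directly, instead of first rewriting through \eqref{varphi2expression} and then cancelling on both sides.
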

\begin{proof}
	Let us assume that \eqref{ccclassGammaM} holds for $(M,\Gamma_{M})$. Then, the following is obtained:
	\begin{itemize}
		\itemindent=-32pt 
		\item[]$\hspace{0.38cm}(\varphi_{M}^{1}\otimes H)\circ(H\otimes((\Gamma_{M}\otimes H)\circ (H\otimes c_{H,M})\circ ((c_{H,H}\circ\delta_{H})\otimes M)))\circ(\delta_{H}\otimes M)$
		\item[]$=(\varphi_{M}^{1}\otimes H)\circ(H\otimes((\Gamma_{M}\otimes H)\circ(H\otimes c_{H,M})\circ(\delta_{H}\otimes M)))\circ(\delta_{H}\otimes M)$ {\footnotesize (by \eqref{ccclassGammaM} for $(M,\Gamma_{M})$)}
		\item[]$=((\varphi_{M}^{1}\circ(H\otimes \Gamma_{M})\circ(\delta_{H}\otimes M))\otimes H)\circ(H\otimes c_{H,M})\circ(\delta_{H}\otimes M)$ {\footnotesize (by coassociativity of $\delta_{H}$)}
		\item[]$=(\varphi_{M}^{2}\otimes H)\circ (H\otimes c_{H,M})\circ(\delta_{H}\otimes M)$ {\footnotesize (by \eqref{varphi2expression})},
	\end{itemize}
	that is, \eqref{condModZhu} holds.
	
Conversely, if \eqref{condModZhu} holds,  we obtain 
\begin{itemize}
	\itemindent=-32pt 
	\item[]$\hspace{0.38cm} (\Gamma_{M}\otimes H)\circ (H\otimes c_{H,M})\circ (\delta_{H}\otimes M)$
	\item[]$= (\varphi_{M}^{1}\otimes H)\circ (\lambda_{H}^{1}\otimes ((\varphi_{M}^{2}\otimes H)\circ (H\otimes c_{H,M})\circ (\delta_{H}\otimes M)))\circ (\delta_{H}\otimes M)$  {\footnotesize (by coassociativitiy of $\delta_{H}$)}
	\item[]$= (\varphi_{M}^{1}\otimes H)\circ (\lambda_{H}^{1}\otimes ((\varphi_{M}^{1}\otimes H)\circ(H\otimes((\Gamma_{M}\otimes H)\circ (H\otimes c_{H,M})\circ ((c_{H,H}\circ\delta_{H})\otimes M)))\circ(\delta_{H}\otimes M)))$ 
	\item[]$\hspace{0.38cm} \circ (\delta_{H}\otimes M) ${\footnotesize (by \eqref{condModZhu})}.
	\item[]$= (\varphi_{M}^{1}\otimes H)\circ ((\lambda_{H}^1\ast {\rm id}_{H})\otimes ((\Gamma_{M}\otimes H)\circ (H\otimes c_{H,M})\circ ((c_{H,H}\circ \delta_{H})\otimes M)))\circ (\delta_{H}\otimes M)$  {\footnotesize (by }
	\item[]$\hspace{0.38cm}$  {\footnotesize coassociativitiy of $\delta_{H}$ and \eqref{actionprod} for $(M,\varphi_{M}^{1})$)}
	\item[]$= (\Gamma_{M}\otimes H)\circ (H\otimes c_{H,M})\circ ((c_{H,H}\circ \delta_{H})\otimes M)$  {\footnotesize (by \eqref{antipode} for $H_{1}$, the counit property and \eqref{actioneta} for $(M,\varphi_{M}^{1})$),}
\end{itemize}
i.e., $(M,\Gamma_{M})$ is in the cocommutativity class of $H_{2}$.
\end{proof}
Therefore, modules in the sense of Zhu are nothing but modules for which $\Gamma_{M}$ lies in the cocommutativity class, providing a significantly simpler characterization of the category ${}_{\mathbb{H}}{\sf Mod}^{{\sf Z}}$ than that given by conditions \eqref{realZhu} and \eqref{condModZhu}. Thus, as an immediate  consequence of the previous theorem, we have the following corollaries:
\begin{corollary} 
	\label{firstMH}
	Let ${\mathbb H}$ be a  Hopf brace in ${\sf C}$. Then, ${}_{\mathbb{H}}{\sf Mod}^{{\sf Z}}={}_{\mathbb{H}}{\sf Mod}^{{\sf cc}}$.
\end{corollary}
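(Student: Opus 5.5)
This is an equality of two full subcategories of ${}_{\mathbb{H}}{\sf Mod}$, so the plan is to check that they have the same objects and the same morphisms.

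For morphisms, both ${}_{\mathbb{H}}{\sf Mod}^{{\sf Z}}$ and ${}_{\mathbb{H}}{\sf Mod}^{{\sf cc}}$ are full subcategories of ${}_{\mathbb{H}}{\sf Mod}$. For ${}_{\mathbb{H}}{\sf Mod}^{{\sf cc}}$ this is its definition. For ${}_{\mathbb{H}}{\sf Mod}^{{\sf Z}}$, the morphisms are defined exactly as in Definition~\ref{RGONdef}, and \cite[Lemma 3.2]{ZHU} shows that every ${\sf Z}$-module is an $\mathbb{H}$-module. Hence, once the object classes agree, the hom-sets agree as well.

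For objects, I argue by double inclusion. First, if $(M,\varphi_{M}^{1},\varphi_{M}^{2})$ is a left ${\sf Z}$-module, then by \cite[Lemma 3.2]{ZHU} condition \eqref{realZhu} is equivalent to \eqref{compatmodH} together with \eqref{condModZhu}. So $M$ is an object of ${}_{\mathbb{H}}{\sf Mod}$ satisfying \eqref{condModZhu}. The converse direction of Theorem~\ref{firstM} then gives that $(M,\Gamma_{M})$ lies in the cocommutativity class of $H_{2}$, i.e.\ \eqref{ccclassGammaM} holds. Therefore $M$ is an object of ${}_{\mathbb{H}}{\sf Mod}^{{\sf cc}}$.

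Conversely, let $M$ be an object of ${}_{\mathbb{H}}{\sf Mod}^{{\sf cc}}$. Then $M$ is an $\mathbb{H}$-module satisfying \eqref{ccclassGammaM}. By Theorem~\ref{firstM}, $M$ is an object of ${}_{\mathbb{H}}{\sf Mod}^{{\sf Z}}$: the first computation in its proof yields \eqref{condModZhu}, and combining this with \eqref{compatmodH} via \cite[Lemma 3.2]{ZHU} gives \eqref{realZhu}.

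There is essentially no obstacle. The only point to watch is that Theorem~\ref{firstM} is stated for objects already known to lie in ${}_{\mathbb{H}}{\sf Mod}$, so the first inclusion must first pass through \cite[Lemma 3.2]{ZHU} to place a ${\sf Z}$-module inside ${}_{\mathbb{H}}{\sf Mod}$ before Theorem~\ref{firstM} can be applied.
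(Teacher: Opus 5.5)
Your proof is correct and follows the paper's route. The paper obtains this corollary as an immediate consequence of Theorem~\ref{firstM}, combined with \cite[Lemma 3.2]{ZHU}, which places every ${\sf Z}$-module inside ${}_{\mathbb{H}}{\sf Mod}$; your remarks that the hom-sets agree because both are full subcategories of ${}_{\mathbb{H}}{\sf Mod}$, and that the first inclusion must pass through Zhu's lemma before Theorem~\ref{firstM} applies, just make explicit what the paper leaves implicit.
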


\begin{corollary} 
	\label{firstH}
	Let ${\mathbb H}$ be a Hopf brace in ${\sf C}$. The triple $(H,\mu_{H}^{1},\mu_{H}^{2})$  is an object in ${}_{\mathbb{H}}{\sf Mod}^{{\sf Z}}$ if and only if $(H_{1},\Gamma_{H_{1}})$ belongs to the cocommutativity class of $H_{2}$.
\end{corollary}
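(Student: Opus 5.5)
The statement is Corollary \ref{firstH}, and I will deduce it directly from Theorem \ref{firstM}, applied to the particular left $\mathbb{H}$-module $(H,\mu_{H}^{1},\mu_{H}^{2})$ of Example \ref{ex_HmodHbrace}. The plan has three steps.

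\textbf{Step 1 (the hypothesis of Theorem \ref{firstM} holds).} Example \ref{ex_HmodHbrace} shows that $(H,\varphi_{H}^{1}=\mu_{H}^{1},\varphi_{H}^{2}=\mu_{H}^{2})$ is an object of ${}_{\mathbb{H}}{\sf Mod}$. Indeed, the compatibility \eqref{compatmodH} for it is exactly \eqref{compatHbrace}.

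\textbf{Step 2 (identify $\Gamma_{M}$ for $M=H$).} Substituting $\varphi_{H}^{1}=\mu_{H}^{1}$ and $\varphi_{H}^{2}=\mu_{H}^{2}$ into \eqref{GammaMdef} gives
\[\Gamma_{H}=\mu_{H}^{1}\circ(\lambda_{H}^{1}\otimes\mu_{H}^{2})\circ(\delta_{H}\otimes H).\]
This is literally $\Gamma_{H_{1}}$ as defined in \eqref{def_GammaH1}, as Example \ref{ex_HmodHbrace} already notes. No computation is required here beyond comparing the two formulas.

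\textbf{Step 3 (apply Theorem \ref{firstM}).} Theorem \ref{firstM} says that $(H,\mu_{H}^{1},\mu_{H}^{2})$ lies in ${}_{\mathbb{H}}{\sf Mod}^{{\sf Z}}$ if and only if $(H,\Gamma_{H})$ satisfies \eqref{ccclassGammaM}. By Step 2, $\Gamma_{H}=\Gamma_{H_{1}}$, so \eqref{ccclassGammaM} for $(H,\Gamma_{H})$ is the same equation as \eqref{ccclass} for the left $H_{2}$-module $(H_{1},\Gamma_{H_{1}})$. That is precisely the statement that $(H_{1},\Gamma_{H_{1}})$ belongs to the cocommutativity class of $H_{2}$. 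The fact that $(H_{1},\Gamma_{H_{1}})$ is a left $H_{2}$-module was recalled earlier in the paper, so the cocommutativity-class condition is meaningful.

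There is no real obstacle in this argument. The only point needing care is checking that the object sits in ${}_{\mathbb{H}}{\sf Mod}$ and that the two $\Gamma$'s coincide on the nose, and both are immediate from the definitions and the earlier example.
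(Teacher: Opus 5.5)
Your proposal is correct and is exactly the paper's argument. The paper obtains the corollary as an immediate consequence of Theorem \ref{firstM}, applied to the left $\mathbb{H}$-module $(H,\mu_{H}^{1},\mu_{H}^{2})$ of Example \ref{ex_HmodHbrace}, for which $\Gamma_{H}=\Gamma_{H_{1}}$.
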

\begin{remark}
	The previous result was previously obtained in \cite[Remark 2.9]{VGRRProj}.
\end{remark}

\begin{corollary}
	\label{firstMH1}
	Let ${\mathbb H}$ be a cocommutative Hopf brace. Then  ${}_{\mathbb{H}}{\sf Mod}^{{\sf Z}}={}_{\mathbb{H}}{\sf Mod}$.
\end{corollary}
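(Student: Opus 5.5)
The statement is Corollary \ref{firstMH1}: for a cocommutative Hopf brace $\mathbb{H}$, ${}_{\mathbb{H}}{\sf Mod}^{{\sf Z}}={}_{\mathbb{H}}{\sf Mod}$. The plan is to combine Corollary \ref{firstMH} with the remark following the definition of ${}_{\mathbb{H}}{\sf Mod}^{{\sf cc}}$. Corollary \ref{firstMH}, which is a direct consequence of Theorem \ref{firstM}, gives ${}_{\mathbb{H}}{\sf Mod}^{{\sf Z}}={}_{\mathbb{H}}{\sf Mod}^{{\sf cc}}$. It then suffices to show that ${}_{\mathbb{H}}{\sf Mod}^{{\sf cc}}={}_{\mathbb{H}}{\sf Mod}$ when $\mathbb{H}$ is cocommutative.

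Let $(M,\varphi_{M}^{1},\varphi_{M}^{2})$ be any object of ${}_{\mathbb{H}}{\sf Mod}$. Cocommutativity means $c_{H,H}\circ\delta_{H}=\delta_{H}$. Substituting this into the right hand side of \eqref{ccclassGammaM} turns it literally into the left hand side. Hence $(M,\Gamma_{M})$ lies in the cocommutativity class of $H_{2}$, and the object belongs to ${}_{\mathbb{H}}{\sf Mod}^{{\sf cc}}$. This uses only that $(M,\Gamma_{M})$ is a left $H_{2}$-module, which was recalled from \cite[Lemma 2.11]{VGRRProj}.

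Since ${}_{\mathbb{H}}{\sf Mod}^{{\sf cc}}$ is a full subcategory of ${}_{\mathbb{H}}{\sf Mod}$ and the two have the same objects, they coincide. Chaining this with Corollary \ref{firstMH} yields the claim. Morphisms need no separate check: all three categories are defined with the same morphisms, namely maps that are $H_{1}$-linear and $H_{2}$-linear.

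There is no real obstacle. The only point to confirm is that fullness of ${}_{\mathbb{H}}{\sf Mod}^{{\sf Z}}$ in ${}_{\mathbb{H}}{\sf Mod}$ holds, and it does, because Definition \ref{ZHUdef} takes morphisms exactly as in Definition \ref{RGONdef}. Alternatively, one can verify \eqref{condModZhu} directly: with $c_{H,H}\circ\delta_{H}=\delta_{H}$ it reduces, via coassociativity and \eqref{varphi2expression}, to an identity. The route through Theorem \ref{firstM} is cleaner and is the one I would use.
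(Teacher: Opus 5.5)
Your proposal is correct and follows the paper's route. The paper obtains this corollary as an immediate consequence of Theorem \ref{firstM} (equivalently, Corollary \ref{firstMH}), combined with the observation made right after the definition of ${}_{\mathbb{H}}{\sf Mod}^{{\sf cc}}$: cocommutativity $c_{H,H}\circ\delta_{H}=\delta_{H}$ makes \eqref{ccclassGammaM} hold trivially, so ${}_{\mathbb{H}}{\sf Mod}^{{\sf cc}}={}_{\mathbb{H}}{\sf Mod}$.
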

\begin{remark}
	The content of the previous corollary corresponds to the observation made in \cite[Remark 2.1]{RGON}.
\end{remark}

\section*{Funding}

The  authors are supported by  Ministerio de Ciencia e Innovaci\'on. Agencia Estatal de Investigaci\'on (Spain) grants no. PID2020-115155GB-I00 and PID2024-15502NB-I00 (European FEDER support included, UE).

Moreover, B. Ramos Pérez is funded by Xunta de Galicia through the Competitive Reference Groups (GRC) grant no. ED431C 2023/31,  and the fellowship grant no. ED481A-2023-023.

%
%
%
%

\bibliographystyle{amsalpha}

\end{document}